\theoremstyle{plain}
\newtheorem{theorem}[equation]{Theorem}
\newtheorem{lemma}[equation]{Lemma}
\newtheorem{corollary}[equation]{Corollary}
\theoremstyle{definition}
\newtheorem{definition}[equation]{Definition}
\theoremstyle{remark}
\newtheorem{remark}[equation]{Remark}
\numberwithin{equation}{section}
\newcommand{\eps}{\varepsilon}
\newcommand{\dist}{\operatorname{dist}}
\newcommand{\re}{\mathbb{R}}
\newcommand{\ree}{\mathbb{R}^{n+1}}
\newcommand{\om}{\Omega}
\newcommand{\nt}{n.t. \nabla\mathcal{S}}
\newcommand{\pom}{\partial\Omega}
\newcommand{\hm}{\omega}
\newcommand{\RNum}[1]{\uppercase\expandafter{\romannumeral #1\relax}}
\renewcommand{\emptyset}{\mbox{\textup{\O}}}
\DeclareMathOperator{\supp}{supp}
\DeclareMathOperator{\diam}{diam}
\DeclareMathOperator{\interior}{int}
\begin{document}
\allowdisplaybreaks

\title[A Two Phase Free  Boundary Problem]{A Singular Integral approach to a 
Two Phase Free  Boundary Problem}

\author{Simon Bortz}

\address{Simon Bortz
\\
Department of Mathematics
\\
University of Missouri
\\
Columbia, MO 65211, USA} \email{sabh8f@mail.missouri.edu}

\author{Steve Hofmann}

\address{Steve Hofmann
\\
Department of Mathematics
\\
University of Missouri
\\
Columbia, MO 65211, USA} \email{hofmanns@missouri.edu}

\thanks{The authors were supported by NSF grant DMS-1361701.}

\date{\today}

\subjclass[2010]{42B20, 31B05, 31B25, 35J08, 35J25}
\keywords{Singular integrals, layer potentials, free boundary problems, Poisson kernels, VMO}

\begin{abstract} We present an alternative proof of a result of Kenig and Toro \cite{KT4}, which states 
that if $\om\subset \ree$ is a two sided NTA domain, with Ahlfors-David regular boundary, 
and the $\log$ of the Poisson kernel associated to 
$\om$ as well as the $\log$ of the Poisson kernel associated to ${\Omega_{\rm ext}}$ are in 
VMO, then the outer unit normal $\nu$ is in VMO. 
Our proof exploits the usual jump relation formula for the non-tangential limit of the gradient of the single 
layer potential.  We are also able to relax the assumptions of Kenig and Toro in the case that the pole for the 
Poisson kernel is finite:  in this case, we assume only that $\pom$ is uniformly rectifiable, and that $\pom$
coincides with  the measure theoretic boundary of $\om$ a.e. with respect to Hausdorff $H^n$ measure.

\end{abstract}

\maketitle

%\tableofcontents

\section{Introduction}

The study of free boundary problems for Poisson kernels, in which
 the regularity of the Poisson kernel $k$ associated to a domain $\om$, is shown to imply regularity of its 
boundary $\pom$, began with the work of Alt and Caffarelli \cite{AC}. In \cite{AC} it is shown that in the 
presence of sufficient Reifenberg flatness, plus Ahlfors-David regularity of the boundary, 
$\log k \in C^\alpha \implies \nu \in C^\beta$, where $\nu$ is the unit normal to $\pom$. In \cite{J}, 
Jerison showed that $\beta$ can be taken to be equal to $\alpha$.  In the same work, 
Jerison studied the regularity of the unit normal under the assumption that $\log k$ was 
continuous and that $\pom$ was Lipschitz.   The converse to the Alt-Caffarelli-Jerison $C^\alpha$ result
is a classical theorem of Kellog \cite{Kel}.

Kenig and Toro investigated what may be considered the `end point' case of the results in \cite{AC}, 
again under the assumption of Reifenberg flatness and Ahlfors-David regularity. In \cite{KT1} and \cite{KT3}, 
they showed that given those background hypotheses, one obtains
$$\log k \in VMO(d \sigma) \iff \nu \in VMO(d \sigma)$$
(more precisely, for the direction ``$\implies$", they show that $\nu \in VMO_{loc}(d\sigma)$;
% i.e., that $\nu\in VMO(\Delta)$ for any surface ball $\Delta\subset \pom$;  
we shall return to this point below),
where for a domain $\om\subset \ree$, we let $\sigma:= H^n|_{\pom}$ denote the surface measure on
$\pom$, where as usual $H^n$ denotes $n$-dimensional Hausdorff measure.
It is well known that in the absence of Reifenberg flatness, 
$\log k \in VMO(d \sigma)$ % \centernot \implies 
need not imply that $\nu \in VMO(d \sigma)$: the counterexample is the double cone (see \cite{AC} and \cite{KP}). 
With this in mind one may ask under what other hypotheses do we have 
$\log k \in VMO(d \sigma) \implies \nu \in VMO(d \sigma)$, 
and when do we have $\log k \in VMO(d \sigma) \iff \nu \in VMO(d \sigma)$?

The second author, along with M. Mitrea and M. Taylor \cite{HMT}, showed that if $\om$ is 2-sided NTA, with 
Ahlfors-David regular (``ADR") boundary, 
then $\nu \in VMO(d \sigma)$ implies vanishing Reifenberg flatness, and hence that $ \log k \in VMO(d \sigma)$,
by the result of \cite{KT1}. Thus, it seems reasonable to conjecture that  
in the setting of a 2-sided NTA domain with ADR boundary, 
$\log k \in VMO(d \sigma) \implies \nu \in VMO(d \sigma)$. 
Indeed, this very question was posed in \cite{KT4}. In trying to answer this question 
(which remains open), the authors of the present work
found an alternative approach, based on 
$L^p$ bounds and jump relations for
layer potentials, to prove a two-phase version of the problem, which had previously been treated in \cite{KT4}. 

In \cite{KT4} Kenig and Toro established the following: suppose that $\om$ is a 2-sided chord-arc domain
(i.e, a 2-sided NTA domain with ADR boundary), and 
that $k_1$ and $k_2$ are the Poisson kernels of 
$\om$ and ${\Omega_{\rm ext}}$ respectively, with some fixed poles (either finite or infinite).   
If $\log k_1, \log k_2 \in VMO(d \sigma)$ (or even $VMO_{loc}$), then $\nu \in VMO_{loc}(d \sigma)$. 
Their proof uses a blow-up argument which is quite natural given the nature of the problem. 
This blow-up argument, as well as our arguments here, require that one work with local versions of $VMO$;
see Definition \ref{defvmo} below.
In the 
present paper, we will use $L^p$ bounds and jump relations for 
the gradient of the single layer potential  to show that if $\log k_1, \log k_2 \in VMO_{loc}(d \sigma)$, 
then $\nu \in VMO_{loc}(d \sigma)$. 
Our approach has the additional modest virtue that it allows us to weaken the 2-sided NTA condition
in a non-trivial way,
in the case that our Poisson kernels have finite poles.  More precisely, 
%we show that if $\om$ is a domain with Uniformly Rectifiable boundary, whose measure theoretic boundary has 
%full measure (following \cite{HMT}, we call such a domain a ``UR domain"), 
%if $(\pom)^c$ has exactly 2 connected components, and if $k_1:=k_1^{X_1}$ and $k_2
%:=k_2^{X_2}$ are the 
%Poisson kernels of $\om$ and ${\Omega_{\rm ext}}$ respectively, with poles at some fixed
%${X_1} \in \om$ and ${X_2} \in {\Omega_{\rm ext}}$, then
%$\log k_1^{X_1} , \log k_2^{X_2} \in VMO(d \sigma)$ implies that $\nu \in VMO(d \sigma)$.
%I.e., 
our main result is the following.  The terminology used in the theorem and throughout this introduction will be defined
in the sequel.  %, but for the sake of clarity, let us note at this point that $\sigma:= H^n|_{\pom}$, where $H^n$
%is $n$-dimensional Hausdorff measure.
\begin{theorem}\label{T1}
Let $\om$ and  ${\Omega_{\rm ext}}$ be connected domains in $\ree$,
whose common boundary $\pom = \pom_{ext}$ is uniformly rectifiable, and whose measure theoretic boundary
$\partial_*\om$ satisfies $\sigma(\pom \setminus  \partial_*\om)=0$.\footnote{Following \cite{HMT}, 
we use the term ``UR domain", to refer to a domain whose boundary enjoys these properties; see
Definition \ref{URdom} below.} 
Suppose that there is some fixed pair of points  $X_1 \in \om$ and $ X_2\in {\Omega_{\rm ext}}$, such that
$\log k_1^{X_1}, \log k_2^{X_2} \in VMO_{loc}( d\sigma)$, 
where $k_1$ and $k_2$ are the Poisson kernels for $\om$ and ${\Omega_{\rm ext}}$ respectively.
Then $\nu \in VMO_{loc}(d\sigma)$.
\end{theorem}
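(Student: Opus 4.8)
The plan is to exploit the jump relation for the gradient of the single layer potential on the uniformly rectifiable boundary $\pom$. Recall that for a UR domain, $\s$, the single layer potential associated to the Laplacian, satisfies nontangential $L^p$ bounds and a jump formula: the nontangential limit of $\nabla \s f$ from inside $\om$ equals $\tfrac12 \nu(x) f(x) + T f(x)$ for $\sigma$-a.e. $x$, where $T$ is the principal value singular integral operator with kernel $\nabla_x E(x-y)$ (here $E$ is the fundamental solution), while the limit from $\Omega_{\rm ext}$ equals $-\tfrac12 \nu(x) f(x) + Tf(x)$. First I would represent the Poisson kernels via layer potentials: writing $u_1$ for the Green function of $\om$ with pole $X_1$ (so $k_1^{X_1} = -\partial_\nu u_1$ on $\pom$, up to normalization), and similarly $u_2$ for $\Omega_{\rm ext}$, the key point is that near $\pom$ (away from the poles) both $u_1$ and $u_2$ are harmonic, and one can compare them to single layer potentials. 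More directly, the strategy is to build a single layer potential whose boundary data encodes $k_1$ and $k_2$ simultaneously and then read off $\nu$ from the jump.

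The concrete mechanism I anticipate: consider $h := \s (k_1^{X_1} - k_2^{X_2})$ or, better, exploit that $\nabla u_1$ has nontangential limit from inside with normal component essentially $-k_1^{X_1}\nu$ modulo tangential terms controlled by $Tk_1$, and $\nabla u_2$ has nontangential limit from outside with normal component essentially $-k_2^{X_2}\nu$. Subtracting the two jump formulas (applied to $f = $ an appropriate density) isolates a term of the form $\nu \cdot (k_1^{X_1} + k_2^{X_2})$, i.e. a pointwise multiple of $\nu$ by a \emph{positive} function, while the remaining terms are singular integrals applied to those densities. Since $\log k_i^{X_i} \in VMO_{loc}$, the doubling/$A_\infty$-type self-improvement shows each $k_i^{X_i}$ lies (locally) in a reverse-Hölder class with exponents arbitrarily close to its own average, and — crucially — that $k_i^{X_i}$ is multiplicatively close to its average on small scales. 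Then I would argue that the singular integral terms $T k_i$, measured against the (local) average of $k_i$, have small mean oscillation on small balls: this is where the VMO hypothesis on $\log k_i$ enters, via the fact that $k_i$ divided by its average tends to $1$ in a strong enough (e.g. BMO-to-VMO) sense, combined with $L^p$ boundedness of $T$ and a Coifman–Rochberg–Weiss / commutator-type estimate. Dividing through by the positive factor $k_1^{X_1}+k_2^{X_2}$ (legitimate after the reverse-Hölder self-improvement makes this factor locally bounded above and below by its average up to small multiplicative error) then expresses $\nu$, on small balls, as an average plus a term of small mean oscillation, i.e. $\nu \in VMO_{loc}(d\sigma)$.

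The main obstacle, I expect, is precisely the passage from ``$\log k_i \in VMO_{loc}$'' to quantitative control of the singular integral of $k_i$ modulo constants — that is, showing the nonlocal operator $T$ does not destroy the vanishing oscillation. This requires: (i) the self-improvement of $\log k_i \in VMO_{loc}$ to $k_i \in RH_p^{loc}$ with the reverse-Hölder constant tending to $1$ at small scales; (ii) splitting $T k_i$ on a small ball $B$ into a local piece (handled by $L^p$ boundedness of $T$ against $\|k_i - \langle k_i\rangle_B\|_{L^p(2B)}$, which is $o(1)\langle k_i\rangle_B$ by VMO) and a tail piece (where one subtracts the constant $T(\langle k_i\rangle \mathbf 1)$ at the appropriate scale and uses kernel decay plus the controlled growth of $k_i$ across dyadic scales, again with VMO giving slowly-varying averages); and (iii) keeping careful track of locality, since all of $VMO_{loc}$, the Poisson kernel bounds, and the UR layer potential theory are only available locally near $\pom$ and away from the poles $X_1, X_2$. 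A secondary technical point is justifying the layer potential representation of $u_1$ and $u_2$ in the mere UR-domain setting (with $\sigma(\pom\setminus\partial_*\om)=0$ replacing 2-sided NTA) — here I would invoke the divergence-theorem / Green-formula machinery available for UR domains to write $\nabla u_i$ in terms of $\s$ and a solid integral that is harmless near $\pom$.
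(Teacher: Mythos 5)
Your overall framework---invoke the jump relation for $\nabla\mathcal{S}$ on UR domains (the \cite{HMT} result the paper quotes as Lemma \ref{hmtlemma}), convert $\log k_i \in VMO_{loc}$ into reverse-H\"older smallness of $k_i/a$ at small scales, and read $\nu$ off a jump---is indeed the skeleton of the paper's argument, so you have the right tools in hand. But the decomposition you propose is not the one the proof uses, and the difference is decisive. You want to apply $\nabla\mathcal{S}$ to densities built from $k_1,k_2$ themselves and then control the resulting singular integral terms $Tk_i$; the paper instead applies $\nabla\mathcal{S}$ to the \emph{fixed scalar density} $1_{\Delta^*}$ (the indicator of a dilated surface ball), and uses the Poisson kernel only to \emph{split} that indicator:
$1_{\Delta^*} = \bigl(1-\tfrac{k}{a}\bigr)1_{\Delta^*} + \tfrac{k}{a} - \tfrac{k}{a}1_{(\Delta^*)^c}$
with $a=\exp\fint_{\Delta^*}\log k\,d\sigma$. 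The middle piece is the whole point: because $\mathcal{E}(\cdot-z^*)$ is harmonic in $\om$ whenever $z^*\in\Omega_{\rm ext}$, the Poisson-integral identity gives $\mathcal{S}[k](z^*)=\mathcal{E}(X_1-z^*)$ exactly; no $T$ appears, only an explicit smooth function whose gradient oscillates on $\Delta$ at scale $r/\delta(X_1)^{n+1}$. The first piece is handled by the nontangential maximal $L^2$ bound \eqref{eq15} together with your step (i) (VMO $\Rightarrow$ $\|1-k/a\|_{L^2(\Delta^*)}$ small, Lemma \ref{Lemma 2}), and the third is a far-field Calder\'on--Zygmund tail controlled across dyadic scales by the harmonic-measure ratio estimate (Lemma \ref{Lemma 1}). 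Finally, the jump of $\nabla\mathcal{S}1_{\Delta^*}$ is $\nu\cdot 1_{\Delta^*}$, which equals $\nu$ on $\Delta\subset\Delta^*$, so no division is ever needed.

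The genuine gap is therefore your step (ii): you would need to show that $Tk_i$, modulo constants, has vanishing mean oscillation at small scales, and your sketch (subtract $T(\langle k_i\rangle\mathbf{1})$ at each scale, use kernel decay and slowly varying averages) is precisely the hard part---it amounts to a VMO-to-VMO or commutator estimate for the Riesz transform on a general UR set applied to an $A_\infty$ weight, and no such off-the-shelf result is cited or clearly available at this generality. The subsequent division by $k_1+k_2$ makes matters worse, since dividing by a non-constant function generically amplifies oscillation unless you already have multiplicative (not merely additive) control. The paper's indicator trick sidesteps both problems: it never applies $T$ to $k_i$, and the scalar factor multiplying $\nu$ in the jump is identically $1$ on $\Delta$. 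Your opening remark that $\mathcal{E}(\cdot-z^*)$ is harmonic in $\om$ when $z^*$ lies in the complementary domain is the germ of the right observation, but you did not turn it into the cancellation mechanism the proof actually needs.
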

Thus, uniform rectifiability of the boundary, along with the hypothesis that the measure theoretic boundary
has full measure, replace the stronger 2-sided chord arc condition\footnote{It is known that
chord arc domains have uniformly rectifiable boundaries \cite{DJe}, and in fact, the chord arc condition 
is strictly stronger.}, 
at least in the case that our Poisson kernels 
have finite poles.  On the other hand, in the case that the Poisson kernel has pole at infinity,
we impose an NTA hypothesis, as in \cite{KT4}.  Our second result is as follows.
\begin{theorem}\label{T2} \
\begin{enumerate}
\item Suppose that $\om$ is an unbounded chord arc domain with $\om_{ext}$ connected,  
that $\log k_1 \in VMO(d \sigma)$, where $k_1$ is the Poisson kernel for $\om$ with pole at infinity, 
and that $\log k_2^{X_2} \in VMO_{loc}(d \sigma)$, where $k_2^{X_2}$ is the Poisson kernel for 
${\Omega_{\rm ext}}$ with pole ${X_2} \in {\Omega_{\rm ext}}$.  Then $\nu \in VMO_{loc}(d \sigma)$.

\item Suppose $\om$ is an unbounded 2-sided chord-arc domain, with unbounded complement, 
and that $\log k_1, \log k_2 \in VMO(d \sigma)$, where $k_1$ is the 
Poisson kernel for $\om$ with pole at infinity, and $k_2$ is the 
Poisson kernel for ${\Omega_{\rm ext}}$ with pole at infinity.   Then $\nu \in VMO(d \sigma)$.
\end{enumerate}
\end{theorem}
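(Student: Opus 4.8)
The plan is to turn the hypotheses on the Poisson kernels into two jump‑relation identities for $\nabla\mathcal{S}$ (the gradient of the single layer potential), and then to eliminate the singular integral operator from those identities by means of a commutator.

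First I would record the basic layer potential identities. Since $\Omega$ is a two‑sided chord arc domain, both $\Omega$ and $\Omega_{\rm ext}$ are NTA, so the Green functions with pole at infinity $u_1$ (for $\Omega$) and $u_2$ (for $\Omega_{\rm ext}$) exist, are positive, vanish on $\partial\Omega$, grow at most linearly, and satisfy $k_i=-\partial_{\nu_i}u_i$ with $\nu_1=\nu$, $\nu_2=-\nu$. Extending $u_i$ by zero off its domain, one finds $\Delta(u_i\mathbf{1})=k_i\,d\sigma$ in the sense of distributions; a Liouville‑type argument — using the linear growth of $u_i$, the $BMO$ control coming from $\log k_i\in VMO$, and the fact that the \emph{vector‑valued} principal value $\operatorname{p.v.}\nabla\mathcal{S}(k_i)$ converges even when the scalar potential $\mathcal{S}(k_i)$ does not — then produces constant vectors $P_1,P_2$ with
\[
Tk_1-\tfrac12 k_1\,\nu=P_1,\qquad Tk_2+\tfrac12 k_2\,\nu=P_2\qquad\text{$\sigma$-a.e. on }\partial\Omega,
\]
where $T:=\operatorname{p.v.}\nabla\mathcal{S}$ and the jump relations for $\nabla\mathcal{S}$ on chord arc domains have been used. (In the finite‑pole versions, Theorems \ref{T1} and \ref{T2}(1), the same scheme — via the identity $\mathcal{S}(k_i^{X_i})=\Phi(\,\cdot\,-X_i)-G^{X_i}$ on the relevant domain and $\Phi(\,\cdot\,-X_i)$ on the other — gives the analogous identities with $P_i$ replaced by the smooth function $\nabla\Phi(\,\cdot\,-X_i)\big|_{\partial\Omega}$; there one needs only that $\partial\Omega$ is uniformly rectifiable with $\sigma(\partial\Omega\setminus\partial_*\Omega)=0$, as this is exactly what lets one run the relevant divergence identities with the measure theoretic normal and invoke the $L^p$ theory and jump relations for $\nabla\mathcal{S}$ on UR sets.)

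Next, multiply the first identity by $k_2$ and the second by $k_1$ and subtract. Writing $[T,b]f:=T(bf)-bTf$, and using that $\partial\Omega$ is ADR and uniformly rectifiable so that $T$ is a Calderón–Zygmund operator bounded on $L^p(\sigma)$, this gives
\[
k_1k_2\,\nu=[T,k_1]k_2-[T,k_2]k_1-k_2P_1+k_1P_2\qquad\text{$\sigma$-a.e.}
\]
The gain is that the operator $T$ — globally defined only modulo constants, and applied to the nondecaying densities $k_i$ — has been replaced by genuine commutators, and the constants $P_i$ (or, in the finite‑pole case, the smooth functions $\nabla\Phi(\,\cdot\,-X_i)$) survive. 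It remains to check that the right‑hand side, divided by $k_1k_2$, lies in $VMO$. From $\log k_i\in VMO(d\sigma)$ one extracts: $k_i,k_i^{-1}\in L^p_{\rm loc}(\sigma)$ for all $p$ (John–Nirenberg); $k_i\in A_\infty(\sigma)$ with reverse Hölder constants tending to $1$ at small scales; and $\sigma(\Delta)^{-1}\!\int_{\Delta}|k_i-(k_i)_\Delta|\,d\sigma\big/(k_i)_\Delta\to0$ as $\operatorname{diam}\Delta\to0$, uniformly over surface balls. One then estimates $[T,k_1]k_2(x)=\operatorname{p.v.}\!\int(k_1(x)-k_1(y))\nabla\Phi(x-y)k_2(y)\,d\sigma(y)$ on a surface ball $\Delta=\Delta(x_0,r)$ by splitting at $2\Delta$: on the local part one uses the $L^2$ bound for $T$, the smallness of $|k_1(x)-k_1(y)|$ relative to $(k_1)_{2\Delta}$, and the reverse Hölder inequality $\big(\sigma(2\Delta)^{-1}\!\int_{2\Delta}k_2^2\big)^{1/2}\lesssim(k_2)_{2\Delta}$; on the tail one uses that $\nabla^2\Phi$ decays geometrically over dyadic annuli, which dominates both the at‑most‑logarithmic growth of the $BMO$ increments of $\log k_i$ and the growth of $(k_i)_{2^j\Delta}$. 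This yields $\operatorname{osc}_{\Delta}[T,k_1]k_2=o(1)\,(k_1)_{2\Delta}(k_2)_{2\Delta}$ as $r\to0$, uniformly in $x_0$; since $(k_1k_2)_\Delta\approx(k_1)_\Delta(k_2)_\Delta$ and the $k_iP_j$ terms are handled the same way (more easily, $P_j$ being constant), dividing by $k_1k_2$ and using the relative‑oscillation bound gives $\sigma(\Delta)^{-1}\!\int_{\Delta}|\nu-(\nu)_\Delta|\,d\sigma\to0$ as $r\to0$, uniformly. Hence $\nu\in VMO(d\sigma)$ (and, in the finite‑pole setting, $\nu\in VMO_{\rm loc}$, the supremum over $x_0$ being taken over compacta).

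The main obstacle is the commutator estimate of the previous paragraph. The classical theorems of Coifman–Rochberg–Weiss and Uchiyama handle $[T,b]$ with $b\in VMO$ — in particular with $b$ bounded — whereas here the relevant symbols $k_i$, and the functions on which they act, lie only in $e^{VMO}\subset A_\infty$ and may be unbounded and merely locally integrable; the required $VMO$ estimate for the commutator must therefore be produced by hand, exploiting the reverse Hölder inequalities and the control of $BMO$ increments across scales furnished by $\log k_i\in VMO$ so as to absorb the nonlocal tails. A secondary difficulty, more bookkeeping than substance, is making the identities of the first paragraph rigorous on UR (respectively chord arc) domains: justifying the Green and divergence identities with the measure theoretic normal (this is precisely where $\sigma(\partial\Omega\setminus\partial_*\Omega)=0$ enters), applying the $L^p$ bounds and jump relations for $\nabla\mathcal{S}$ on UR sets, and — in the pole at infinity case — using the two‑sided NTA hypothesis to produce the Green function with pole at infinity and its linear growth, which is exactly the point at which Theorem \ref{T2} must assume more than Theorem \ref{T1}.
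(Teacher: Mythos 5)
Your plan (derive global jump identities $Tk_1-\tfrac12 k_1\nu=P_1$, $Tk_2+\tfrac12 k_2\nu=P_2$ with $P_i$ constant, then eliminate $T$ through commutators and divide by $k_1k_2$) is genuinely different from the paper's proof, which never forms $Tk_i$ globally: there one applies $\nabla\mathcal{S}$ to $1_{\Delta^*}$, replaces $1_{\Delta^*}$ by $k/a$ with the \emph{scale-dependent} normalization $a=e^{\fint_{\Delta^*}\log k}$, computes the main term by evaluating on the opposite side of the boundary (finite pole) or kills it by the $\varphi_R$ cutoff argument (pole at infinity), and recovers $\nu 1_{\Delta^*}$ from the two-sided jump relations. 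The gap in your argument is precisely the step you dismiss as easy: ``the $k_iP_j$ terms are handled the same way (more easily, $P_j$ being constant).'' After dividing by $k_1k_2$, the contribution of $k_1P_2$ to the oscillation of $\nu$ over $\Delta=\Delta(x,r)$ is of size $\fint_\Delta\fint_\Delta |k_1(x)-k_1(y)|\,|P_2|\,(k_1k_2)^{-1}$, i.e.\ about $\epsilon^{b}\,|P_2|/(k_2)_\Delta$. But $\log k_2\in VMO$ gives no uniform positive lower bound on $(k_2)_{\Delta(x,r)}$ for $x$ in a compact set and $r$ small (Lemma \ref{Lemma 1} plus Lemma \ref{bourgainsLemma} only give $(k_2)_{\Delta(x,r)}\gtrsim_K r^{\tau}$), and your numerator carries no compensating power of $r$: the oscillation of $k_1$ furnished by VMO is $\epsilon^{b}(k_1)_\Delta$, with no gain in $r$. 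So for fixed $\epsilon$ the bound degenerates like $r^{-\tau}$ as $r\to 0$, and the required smallness, uniform over $x\in K$ and all sufficiently small $r$, does not follow; note $P_i\neq 0$ in general (for the half space $P_i=\pm\tfrac12 e_{n+1}$), so these terms cannot be discarded. Contrast this with the paper's term $\RNum{2}$, where the corresponding ``fixed-size'' object contributes either an explicit factor $r/\delta(X_1)^{n+1}$ (finite pole) or vanishes identically in the $R\to\infty$ limit (pole at infinity); that extra factor of $r$, respectively the exact vanishing, is exactly what beats the possible degeneracy of the normalization, and your decomposition has no analogue of it.

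Two further soft spots, secondary but real. First, your commutator tail estimate as sketched (split at $2\Delta$ and let the decay of $\nabla^2\mathcal{E}$ ``dominate'') only yields $O(1)\,(k_1)_\Delta(k_2)_\Delta$, not $o(1)$: the $j$-th dyadic annulus contributes roughly $2^{-j}\,\omega_1(\Delta(x,2^jr))\,\omega_2(\Delta(x,2^jr))/(2^jr)^{2n}$, which is comparable to the local product at that scale, so the sum over $j\ge 1$ is merely bounded. To extract smallness one must split at $\Delta(x,Mr)$ with $M=M(\epsilon)\to\infty$ and balance the local loss $M^{n/2}$ against $\epsilon^{b}$, using the scale-comparison estimates of Lemma \ref{Lemma 1} for the far tail, exactly as in the paper's term $\RNum{3}$. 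Second, the a.e.\ existence of $Tk_i$ and the identity $Tk_i\mp\tfrac12 k_i\nu=P_i$ are asserted rather than proved: the integral defining $Tk_i$ is not absolutely convergent at infinity (already for the half space), so both the definition of $P_i$ and the Liouville argument require a renormalization/cancellation step of the same order of difficulty as the paper's $\RNum{2}_R\to 0$ computation with $\varphi_R$, the Green function with pole at infinity, and Lemmas \ref{CFMS}, \ref{Carlest}; as written this is deferred, not done.
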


As an immediate consequence, we obtain the following, as in \cite{KT4}.
\begin{corollary}\label{c1}  Suppose that $\om$ is a 2-sided chord-arc domain, and let $k_1$, $k_2$
be the Poisson kernels for $\om$ and for $\om_{ext}$, respectively, with fixed poles which may either be
finite or infinite.  Suppose also that
$\log k_1 \in C^\alpha$, and that $\log k_2 \in VMO_{loc}(d \sigma)$.
Then $\pom$ is locally the graph of a $C^{1,\alpha}$ function. 
%In particular, if $\pom$ is bounded then $\om$ is a $C^{1,\alpha}$ domain.
\end{corollary}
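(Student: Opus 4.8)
The plan is to deduce the corollary from Theorems~\ref{T1}--\ref{T2} by the bootstrap already used in \cite{KT4}: first obtain $\nu\in VMO_{loc}(d\sigma)$, then upgrade the geometry to vanishing Reifenberg flatness, and finally use the $C^\alpha$ hypothesis on $\log k_1$ to promote $\nu$ to $C^\alpha$ and hence $\pom$ to a $C^{1,\alpha}$ graph.

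First I would note that a 2-sided chord arc domain is a UR domain in the sense of Definition~\ref{URdom}: its boundary is ADR, hence uniformly rectifiable by \cite{DJe}, and for a 2-sided NTA domain the topological boundary agrees $H^n$-a.e. with the measure theoretic boundary. Since locally H\"older continuous functions lie in $VMO_{loc}(d\sigma)$, the hypothesis $\log k_1\in C^\alpha$ yields $\log k_1\in VMO_{loc}(d\sigma)$, while $\log k_2\in VMO_{loc}(d\sigma)$ by assumption. If both poles are finite, Theorem~\ref{T1} applies and gives $\nu\in VMO_{loc}(d\sigma)$. If a pole is at infinity, then $\om$ is unbounded and, being 2-sided NTA, has connected complement (unbounded when both poles are infinite), placing us in the setting of Theorem~\ref{T2}; here I would use the standard fact that on a chord arc domain membership of $\log k$ in $VMO_{loc}(d\sigma)$ (resp. $VMO(d\sigma)$) is independent of the finite pole and equivalent to the corresponding statement for the pole at infinity (see \cite{KT1,KT3,KT4}), which lets one feed the hypotheses into Theorem~\ref{T2}. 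In all cases $\nu\in VMO_{loc}(d\sigma)$.

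Next, with $\om$ a 2-sided NTA domain with ADR boundary and $\nu\in VMO_{loc}(d\sigma)$, I would quote \cite{HMT} (the converse direction recalled in the Introduction) to conclude that $\om$ is vanishing Reifenberg flat, locally uniformly on $\pom$; in particular, at small enough scales the flatness falls below any prescribed threshold. Consequently the Alt--Caffarelli free boundary regularity theorem \cite{AC}, together with Jerison's sharp exponent \cite{J}, applies to $\log k_1\in C^\alpha$ and gives $\nu\in C^\alpha$ locally on $\pom$. To finish, I would invoke the standard fact that a Reifenberg flat ADR domain whose unit normal is continuous is locally a $C^1$ graph in suitably rotated coordinates, and that a $C^\alpha$ modulus of continuity for $\nu$ upgrades this to a $C^{1,\alpha}$ graph (cf. \cite{KT1,KT4}).

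I do not expect a genuine obstacle: the substance of the paper lies in Theorems~\ref{T1}--\ref{T2}, and the corollary follows ``immediately'' exactly as in \cite{KT4}. The only points requiring care are the inclusion $C^\alpha\subset VMO_{loc}$, the matching of the (finite versus infinite) pole conventions to Theorem~\ref{T1} or Theorem~\ref{T2} via pole-independence of the $VMO$ condition, and the invocation of the bootstrap steps from \cite{HMT}, \cite{AC} and \cite{J} in the appropriate form.
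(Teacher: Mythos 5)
Your proposal is correct and follows essentially the same route as the paper: apply Theorem \ref{T1} or \ref{T2} to get $\nu\in VMO_{loc}(d\sigma)$, invoke the result of \cite{HMT} (localized if $\pom$ is unbounded) to obtain vanishing Reifenberg flatness, and then apply the Alt--Caffarelli--Jerison theorem to conclude $\nu\in C^\alpha$ locally, hence the $C^{1,\alpha}$ graph property. The extra details you supply (the inclusion $C^\alpha\subset VMO_{loc}$, matching the pole conventions, and the final normal-to-graph step) are exactly the points the paper leaves implicit.
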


Let us indicate the proof the the Corollary in the case that $\pom$ is bounded
(then $VMO_{loc} = VMO$ in this case). Observe that 
%in particular, $\log k_1\in VMO_{loc}$, so 
we may invoke Theorem \ref{T1} or Theorem \ref{T2} to deduce that $\nu \in VMO$.  Consequently, by the aforementioned result of \cite{HMT}, we find that $\Omega$ is a vanishing Reifenberg domain. Thus, by the 
Alt-Caffarelli-Jerison theorem (\cite{AC}, \cite{J}), 
%there exists $\rho > 0$ such that for all $x \in \pom$, 
$\nu\in C^\alpha(\pom)$. If $\pom$ is unbounded, we note that the results of  \cite{AC} and \cite{J} are 
local in nature, and furthermore observe that the result in \cite{HMT} may be localized as well, with 
essentially the same proof.

Throughout the present paper $\om$ will be a connected open (proper) subset of $\ree$. We begin by 
setting notation, and recalling some definitions. 

\subsection{Notation and Definitions}

\begin{list}{$\bullet$}{\leftmargin=0.4cm  \itemsep=0.2cm}

\item We use the letters $c,C$ to denote harmless positive constants, not necessarily
the same at each occurrence, which depend only on dimension and the
constants appearing in the hypotheses of the theorems (which we refer to as the
``allowable parameters'').  We shall also
sometimes write $a\lesssim b$ and $a \approx b$ to mean, respectively,
that $a \leq C b$ and $0< c \leq a/b\leq C$, where the constants $c$ and $C$ are as above, unless
explicitly noted to the contrary.

\item Unless otherwise explicitly stated we will use lower case letters $x,y,z$, etc., to denote points on $\pom$, and capital letters
$X,Y,Z$, etc., to denote generic points in $\ree$ (especially those in $\om$ and $\Omega_{ext}$). 

\item The open $(n+1)$-dimensional Euclidean ball of radius $r$ will be denoted
$B(x,r)$ when the center $x$ lies on $\pom$, or $B(X,r)$ when the center
$X \in (\pom)^c$.  A ``surface ball'' is denoted
$\Delta(x,r):= B(x,r) \cap\partial\Omega.$

%\item Given a Euclidean ball $B$ or surface ball $\Delta$, its radius will be denoted
%$r_B$ or $r_\Delta$, respectively.

%\item Given a Euclidean or surface ball $B= B(X,r)$ or $\Delta = \Delta(x,r)$, its concentric
%dilate by a factor of $\kappa >0$ will be denoted
%$\kappa B := B(X,\kappa r)$ or $\kappa \Delta := \Delta(x,\kappa r).$

\item Given a domain $\om$, for $X \in \ree$, we set $\delta(X):= \dist(X,\pom)$. 

\item We let $H^n$ denote $n$-dimensional Hausdorff measure, and let
$\sigma := H^n\big|_{E}$ denote the ``surface measure'' on a closed set $E\subset \ree$
of co-dimension 1.

\item For a Borel set $A\subset \ree$, we let $1_A$ denote the usual
indicator function of $A$, i.e. $1_A(x) = 1$ if $x\in A$, and $1_A(x)= 0$ if $x\notin A$.

\item For a Borel set $A\subset \ree$,  we let $\interior(A)$ denote the interior of $A$.
%If $A\subset E\subset \ree$,
%then unless otherwise specified, $\interior(A)$ will denote the relative interior, i.e., the largest relatively open set in
%$E$ contained in $A$.  Thus, for $A\subset E$,
%the boundary is then well defined by $\partial A := \overline{A} \setminus {\rm int}(A)$.

%\item For a Borel set $A$, we denote by $\mathcal{C}(A)$ the space of continuous functions on
%$A$, by $\mathcal{C}_c(A)$ the subspace of $\mathcal{C}(A)$
%with compact support in $A$, and by $\mathcal{C}_b(A)$ the
%space of bounded continuous functions on $A$.  If $A$ is unbounded, we denote by
%$C_0(A)$ the space of continuous functions on $A$ converging to $0$ at infinity.

\item Given a Borel measure $\mu$, and a Borel set $A$, with positive and finite $\mu$ measure, we
set $\fint_A f d\mu := \mu(A)^{-1} \int_A f d\mu$.

\end{list}

\begin{definition}\label{defadr} ({\bf  ADR})  (aka {\it Ahlfors-David regular}).
We say that a  set $E \subset \ree$, of Hausdorff dimension $n$, is ADR
if it is closed, and if there is some uniform constant $C$ such that
\begin{equation} \label{eq1.ADR}
\frac1C\, r^n \leq \sigma\big(\Delta(x,r)\big)
\leq C\, r^n,\quad\forall r\in(0,\diam (E)),\ x \in E,
\end{equation}
where $\diam(E)$ may be infinite.
%Here, $\Delta(x,r):= E\cap B(x,r)$ is the ``surface ball" of radius $r$,
%and $\sigma:= H^n|_E$ %\footnote{}
%is the ``surface measure" on $E$, where $H^n$ denotes $n$-dimensional
%Hausdorff measure.
\end{definition}

\begin{definition}({\bf Riesz transforms and the single layer potential})
Let $E\subset \ree$ be an $n$-dimensional ADR (hence closed) set with surface measure $\sigma$.
We define the (vector valued) Riesz kernel as
\begin{equation}\label{RieszKern}
\mathcal{K}(x) = \tilde{c}_n\frac{x}{|x|^{n+1}}
\end{equation}
where $\tilde{c}_n$ is chosen so that $\mathcal{K}$ is the gradient of fundamental solution to the Laplacian. 
For a Borel measurable function $f$, we then define the Riesz transform 
\begin{equation}\label{RieszXform}
\mathcal{R}f(X) := \mathcal{K} \ast (f\sigma)(X) = \int_{E} \mathcal{K}(X-y)f(y)\, d\sigma(y) \quad X \in \ree\,,
\end{equation} 
as well as the truncated Riesz transforms
$$\mathcal{R}_\eps f(X):= \int_{E\,\cap\,\{|X-y|>\eps\}} \mathcal{K}(X-y)f(y)\, d\sigma(y)\,,\qquad  \eps>0\,.$$
We define $\mathcal{S}$ the single layer potential for the Laplacian relative to $E$ to be
\begin{equation}
\mathcal{S}f(X): = \int_{E} \mathcal{E}(X -y) f(y) \, d\sigma(y),
\end{equation}
where $\mathcal{E}(X) = c_n|X|^{1-n}$ is the (positive) fundamental solution to the 
Laplacian in $\ree$. Notice that $\nabla \mathcal{S}f(X) = \mathcal{R}f(X)$ for $X \not\in E$.
\end{definition}

\begin{definition}\label{defur} ({\bf UR}) (aka {\it uniformly rectifiable}).
There are numerous characterizations of UR sets (many of which remain valid in higher co-dimensions); 
we refer the interested reader to \cite{DS1,DS2} for details.  For our purposes, it is most useful to use the following definition.
Let $E\subset \ree$ be an $n$-dimensional ADR (hence closed) set with surface measure $\sigma$. Then $E$ is UR if and only if the Riesz transform operator is $L^2$ bounded with respect to surface measure, in the sense that
 \begin{equation}\label{eqrtbound}
 \sup_{\eps>0} \|\mathcal{R}_\eps f\|_{L^2(E,\sigma)} \le C\rVert f \rVert_{L^2(E,\sigma)}\,.
 \end{equation}
 See \cite{DS1} for a proof of the fact that UR (defined in various other ways) implies
 \eqref{eqrtbound}.  For the converse, see \cite{MMV} in the case $n=1$, and \cite{NToV} in general.
 For our purposes, we shall require only the first implication (UR implies \eqref{eqrtbound}).
\end{definition}

\begin{remark}\label{r1}
We note that the principal value $\mathcal{R} f(x)= \lim_{\eps\to 0} \mathcal{R}_\eps f(x)$
exists for a.e. $x\in E$, provided that $E$ is rectifiable, thus, in particular, if $E$ is UR 
(see, e.g., \cite[Theorem 20.28]{M}).
Of course, the $L^2$ bound in \eqref{eqrtbound} holds also for the principal value operator.
Moreover, by standard Calder\'on-Zygmund theory, the $L^2$ bound self-improves to give $L^p$ bounds, $1<p<\infty$.
\end{remark}
%Another characterization of UR that seems pertinent to the present work 
%is the ``Usual Square Function Estimates'' (USFE) characterization (see \cite{DS2}).

\begin{definition}\label{defurchar} ({\bf ``UR character"}).   Given a UR set $E\subset \ree$, its ``UR character"
is just the constant $C$ in \eqref{eqrtbound},
along with the ADR constant; or equivalently,
the quantitative bounds involved in any particular characterization of uniform rectifiability.
\end{definition}

\begin{definition}\label{URdom}({\bf UR domain}). Following the terminology in \cite{HMT},
we will say that a domain $\om$ is a UR domain if $\pom$ is UR,
and if the measure theoretic boundary
$\partial_*\om$ (see Definition \ref{MTbdry} below)
satisfies $\sigma(\pom \setminus  \partial_*\om)=0$.
\end{definition}

\begin{definition} ({\bf Corkscrew condition}).  \label{def1.cork}
Following
\cite{JK}, we say that a domain $\Omega\subset \ree$
satisfies the ``Corkscrew condition'' if for some uniform constant $c>0$ and
for every surface ball $\Delta:=\Delta(x,r),$ with $x\in \partial\Omega$ and
$0<r<\diam(\partial\Omega)$, there is a ball
$B(X_\Delta,cr)\subset B(x,r)\cap\Omega$.  The point $X_\Delta\subset \Omega$ is called
a ``Corkscrew point'' relative to $\Delta.$  We note that  we may allow
$r<C\diam(\pom)$ for any fixed $C$, simply by adjusting the constant $c$.
\end{definition}

\begin{definition}({\bf Harnack Chain condition}).  \label{def1.hc} Again following \cite{JK}, we say
that $\Omega$ satisfies the Harnack Chain condition if there is a uniform constant $C$ such that
for every $\rho >0,\, \Lambda\geq 1$, and every pair of points
$X,X' \in \Omega$ with $\delta(X),\,\delta(X') \geq\rho$ and $|X-X'|<\Lambda\,\rho$, there is a chain of
open balls
$B_1,\dots,B_N \subset \Omega$, $N\leq C(\Lambda)$,
with $X\in B_1,\, X'\in B_N,$ $B_k\cap B_{k+1}\neq \emptyset$
and $C^{-1}\diam (B_k) \leq \dist (B_k,\partial\Omega)\leq C\diam (B_k).$  The chain of balls is called
a ``Harnack Chain''.
\end{definition}

\begin{definition}({\bf NTA and 2-sided NTA}). \label{def1.nta} Again following \cite{JK}, we say that a
domain $\Omega\subset \ree$ is NTA (``Non-tangentially accessible") if it satisfies the
Harnack Chain condition, and if both $\Omega$ and
$\Omega_{\rm ext}:= \ree\setminus \overline{\Omega}$ satisfy the Corkscrew condition (for domains). If $\om$ and $\Omega_{\rm ext}$ both NTA domains then we say $\om$ is a 2-sided NTA domain. Note that in either case $\om$ is a UR domain.
\end{definition}

\begin{definition}({\bf Chord arc domain and 2-sided Chord arc domain}). We say that a domain 
$\om \subset \ree$ is a chord arc (resp. 2-sided chord arc) domain, if $\om$ is an NTA 
(resp. 2-sided NTA) domain, and $\pom$ is n-dimensional ADR.
\end{definition}

\begin{definition}({\bf Measure theoretic boundary}).\label{MTbdry} 
Given $\om \subset \ree$, a
set of locally finite perimeter\footnote{We note that if $\pom$ is ADR
(in particular, if it is UR), then $\om$ has locally finite perimeter, by the criterion in 
\cite[Theorem 1, p. 222]{EG}.}, we say that $x \in \partial_*\om$, the measure theoretic boundary of $\om$, if
\begin{equation}\label{MT1}
\limsup_{r \to 0} \frac{|B(x,r) \cap \om|}{r^{n+1}} > 0
\end{equation}
and
\begin{equation}\label{MT2}
\limsup_{r \to 0} \frac{|B(x,r) \cap \om^c|}{r^{n+1}} > 0,
\end{equation}
where $|A|$ is the Lebesgue measure of $A$.
Given a domain $\om\subseteq\ree$ we say that the measure theoretic boundary has full measure if $H^{n} (\pom \setminus \partial_*\om) = 0$. One should note that if $\om \subset \ree$ is a set of locally finite perimeter then the measure theoretic boundary and the reduced boundary differ by a set of $H^n$ measure zero, so it then follows that the measure theoretic boundary has full measure if and only the reduced boundary has full measure (see \cite[Section 5.8]{EG}).  
\end{definition}

\begin{definition}\label{NTapp}({\bf Nontangential approach region and maximal function}). Fix $\alpha > 0$ and let $\om$ be a domain 
then for $x \in \pom$ we define the nontangential approach region (or ``cone")
\begin{equation}\label{NTapp1}
\Gamma(x) = \Gamma_\alpha(x) = \{Y \in \om : |Y - x| < (1 + \alpha)\delta(Y)\}. 
\end{equation}
We also define the nontangential maximal function for $u: \om \to \re$
\begin{equation}\label{NTapp2}
\mathcal{N}u(x) = \mathcal{N}_\alpha u(x) = \sup_{Y \in \Gamma_\alpha(x)}|u(Y)|, \quad x \in \pom.
\end{equation}
We make the convention that $\mathcal{N}u(x) = 0$ when $\Gamma_\alpha(x)=\emptyset$.
\end{definition}
\begin{definition}\label{defvmo} {\bf (VMO and VMO$_{loc}$)}.
Let $E \subset \ree$ be $n$-dimensional ADR, and let $\sigma:=H^n|_E$ as above. 
We denote by $VMO(d\sigma)= VMO(E,d\sigma)$ 
the closure of the set of bounded uniformly continuous functions defined on 
$E$ in $BMO(E,d\sigma$).    
%$$\lim_{r\to 0}\, \sup_{x\in E} \fint_{\Delta(x,r)} \left|f(x) - \fint_{\Delta(x,r)} f\right|\, d\sigma = 0\,.$$
We say that $f \in VMO_{loc}(d\sigma)$ if 
\begin{equation}\label{eq1.26}
\lim_{r\to 0}\, \sup_{x\in K} \fint_{\Delta(x,r)} \left|f(x) - \fint_{\Delta(x,r)} f\right|\, d\sigma = 0\,,
\end{equation}
for every compact $K\subset E$.  Of course,  it is well known that \eqref{eq1.26}
holds with $E$ in place of $K$ in the supremum, if and only if $f \in VMO(E, d\sigma)$.  Thus, 
$VMO$ and $VMO_{loc}$ are distinct only for unbounded $E$.
\end{definition}

We now record some estimates and some known results that will be important in the proofs of the theorems.

\subsection{Preliminary Estimates and Observations}

In the sequel, we will sometimes assume more on $\om$ and ${\Omega_{\rm ext}}$, however $\om$ and ${\Omega_{\rm ext}}$ will always be UR domains. In addition, $k_1$ will be the Poisson kernel for the domain $\om$ and $k_2$ will be the Poisson kernel for ${\Omega_{\rm ext}}$.

Suppose that $\log k_1^{X_1} \in VMO_{loc}(d \sigma)$. 
Let $\hm=\hm^{X_1}$ and $k=k^{X_1}$ 
be the harmonic measure and Poisson Kernel for $\om$ with pole at ${X_1}$ (resp.). Then by the definition of $VMO_{loc}(d\sigma)$, 
for each $B_0 := B(x_0,r_0)$, with  $x_0\in \pom$, and $r_0>0$, and for each $\eta > 0$,
there exists $r_1 >0$ such that if $x \in \pom\cap B_0$, 
and $s \in (0,r_1)$, then
\begin{equation}\label{eq1}
\fint_{\Delta(x,s)} \left| \log k(z) - \fint_{\Delta(x,s)} \log k (y) \, d\sigma(y)\right| \, d\sigma(z) \le \eta.
\end{equation}
Since %$\log k \in VMO$, and 
$\pom$ is ADR, it follows that $k$ and $k^{-1}$ belong to
$RH_{q,loc}(d\sigma)$ (``local Reverse H\"older-$q$"), for every $q<\infty$;  i.e., given $B_0=B(x_0,r_0)$,
for every $q \in (1,\infty)$,  there exists $C_{q,B_0}$ 
such that  for any
surface ball $\Delta =\Delta(x,r)$, with $x\in  B_0\cap\pom$, and $r\leq r_0 $,
\begin{equation}\label{eq2}
\left(\fint_\Delta k^q \, d\sigma \right)^{1/q} \le C_{q,B_0} \fint_{\Delta} k \, d\sigma.
\end{equation}
and 
\begin{equation}\label{eq3}
\left(\fint_{\Delta} k^{-q} \, d\sigma \right)^{1/q} \le C_{q,B_0} \fint_{\Delta} k^{-1} \, d\sigma.
\end{equation}
See, e.g., \cite[Theorem 2.1, p. 332]{KT3}, or the references cited there.
As a consequence of \eqref{eq2} and \eqref{eq3} (holding for all $q <\infty$) we have the following Lemma.

\begin{lemma}({\bf\cite{KT3} Corollary 2.4})\label{Lemma 1} Suppose that
$\log k \in VMO_{loc}(d\sigma)$.
Fix $B_0:= B(x_0,r_0)$, with $x_0\in \pom$ and $r_0>0$. 
Then for all $\beta > 0$, $w\in B_0\cap\pom$,  $s\leq r_0$, and $E \subset \Delta(w,s)=:\Delta\,$, 
\begin{equation}\label{eq4}
C_{\beta,B_0}^{-1}\left(\frac{\sigma(E)}{\sigma(\Delta)} \right)^{1+\beta} \le \frac{\hm(E)}{\hm(\Delta)} \le 
C_{\beta,B_0} \left(\frac{\sigma(E)}{\sigma(\Delta)} \right)^{1-\beta}.
\end{equation}
\end{lemma}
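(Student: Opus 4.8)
The plan is to read \eqref{eq4} as a quantitative statement that the harmonic measure $\hm$ is an $A_\infty$ weight relative to $\sigma$ on the surface ball $\Delta$, with $A_\infty$ exponents that can be taken close to $1$; the upper bound in \eqref{eq4} will be extracted from the local reverse H\"older inequality \eqref{eq2} for $k=d\hm/d\sigma$, and the lower bound from \eqref{eq3} for $k^{-1}$, in both cases letting the reverse H\"older exponent be large (depending on $\beta$). For the upper bound, fix $q$ with $1/q\le\beta$. Since $\hm(E)=\int_E k\,d\sigma$, H\"older's inequality with exponents $q,q'$ gives
\[
\hm(E)\le\Big(\int_\Delta k^{q}\,d\sigma\Big)^{1/q}\sigma(E)^{1/q'}
=\sigma(\Delta)^{1/q}\Big(\fint_\Delta k^{q}\,d\sigma\Big)^{1/q}\sigma(E)^{1/q'},
\]
and inserting \eqref{eq2}, which bounds $\big(\fint_\Delta k^{q}\,d\sigma\big)^{1/q}$ by $C_{q,B_0}\fint_\Delta k\,d\sigma=C_{q,B_0}\hm(\Delta)/\sigma(\Delta)$, we get
\[
\frac{\hm(E)}{\hm(\Delta)}\le C_{q,B_0}\Big(\frac{\sigma(E)}{\sigma(\Delta)}\Big)^{1-1/q}
\le C_{q,B_0}\Big(\frac{\sigma(E)}{\sigma(\Delta)}\Big)^{1-\beta},
\]
using $\sigma(E)\le\sigma(\Delta)$ and $1-1/q\ge1-\beta$; this is the right-hand inequality of \eqref{eq4}.

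For the lower bound I would run the same computation with $\sigma$ and $\hm$ interchanged, using $d\sigma=k^{-1}\,d\hm$ on $\Delta$. Fix $q$ with $1/(q-1)\le\beta$. H\"older with respect to $\hm$ gives $\sigma(E)=\int_E k^{-1}\,d\hm\le\big(\int_\Delta k^{-q}\,d\hm\big)^{1/q}\hm(E)^{1/q'}$, and since $\int_\Delta k^{-q}\,d\hm=\int_\Delta k^{-(q-1)}\,d\sigma=\sigma(\Delta)\fint_\Delta k^{-(q-1)}\,d\sigma$, the inequality \eqref{eq3} with exponent $q-1$ bounds the right-hand side by $\sigma(\Delta)\,C_{q-1,B_0}^{\,q-1}\big(\fint_\Delta k^{-1}\,d\sigma\big)^{q-1}$. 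Granting the comparability
\[
\Big(\fint_\Delta k\,d\sigma\Big)\Big(\fint_\Delta k^{-1}\,d\sigma\Big)\le C_{B_0},
\]
one replaces $\fint_\Delta k^{-1}\,d\sigma$ by $C_{B_0}\,\sigma(\Delta)/\hm(\Delta)$ and arrives at $\sigma(E)/\sigma(\Delta)\le C\,\big(\hm(E)/\hm(\Delta)\big)^{1-1/q}$; rearranging, and using $\hm(E)\le\hm(\Delta)$ together with $q/(q-1)\le1+\beta$, gives the left-hand inequality of \eqref{eq4}, with $C_{\beta,B_0}$ depending on $B_0$ and on the chosen $q$ (hence on $\beta$).

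I expect the only genuine issue — and even this is routine — to be the displayed comparability, equivalently the passage from \eqref{eq3} (a reverse H\"older inequality for $k^{-1}$ relative to $\sigma$) to a reverse H\"older inequality for $k^{-1}$ relative to $\hm$, which is precisely what lets the upper-bound argument apply verbatim to the pair $(\hm,\sigma)$. One route is the standard duality of $A_\infty$ weights: since \eqref{eq2} holds for all $q<\infty$, $k\in A_\infty(\sigma)$, hence $k^{-1}\in A_\infty(\hm)$, and the quantitative bounds transfer; this is also contained in the circle of results around \cite[Theorem~2.1]{KT3}. Alternatively, for surface balls of small radius one can read the comparability directly off \eqref{eq1}: the oscillation of $\log k$ over $\Delta(x,s)$ is at most $\eta$ for $s<r_1(\eta,B_0)$, so John--Nirenberg yields $\fint_\Delta e^{\pm(\log k-\langle\log k\rangle_\Delta)}\,d\sigma\le1+C\eta$, i.e. the comparability with constant near $1$; the finitely many scales $s\in[r_1,r_0]$ are then handled via the doubling of $\hm$ (itself implied by the power bounds already proved) and a compactness argument on $\overline{B_0}\cap\pom$. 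In any event the statement is exactly \cite[Corollary~2.4]{KT3}, which one may also simply cite.
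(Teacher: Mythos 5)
Your argument is correct, and it is consistent with what the paper actually does: the paper offers no proof of Lemma \ref{Lemma 1}, but simply quotes \cite[Corollary 2.4]{KT3}, remarking that the estimate is a consequence of the reverse H\"older inequalities \eqref{eq2} and \eqref{eq3} holding for all $q<\infty$. Your two H\"older-plus-reverse-H\"older computations are exactly the standard way of extracting the power bounds from that information, and both steps check out (for the lower bound, note only that applying \eqref{eq3} with exponent $q-1$ forces $q>2$, which is harmless since you may always enlarge $q$). The one point you flag as ``granted,'' namely $\bigl(\fint_\Delta k\,d\sigma\bigr)\bigl(\fint_\Delta k^{-1}\,d\sigma\bigr)\le C_{B_0}$, closes most cleanly not by the John--Nirenberg/compactness route you sketch, but directly from the exponential reverse Jensen inequality already recorded in the paper: \eqref{eq6} gives $\fint_\Delta k\,d\sigma\approx e^{\,\fint_\Delta\log k\,d\sigma}$, and the same argument (via \cite[Theorem 2.15]{GR}) applied to \eqref{eq3} gives $\fint_\Delta k^{-1}\,d\sigma\approx e^{-\fint_\Delta\log k\,d\sigma}$, for every surface ball $\Delta=\Delta(w,s)$ with $w\in B_0\cap\pom$ and $s\le r_0$; multiplying the two yields the comparability with a constant depending only on $B_0$. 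Equivalently, this comparability is what lets your upper-bound computation apply verbatim to the pair $(\hm,\sigma)$, as you anticipated. So your proposal supplies a complete, self-contained proof of a statement the paper leaves to \cite{KT3}; the only adjustment I would make is to replace the compactness discussion by the appeal to \eqref{eq6} and its analogue for $k^{-1}$.
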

The reverse H\"older estimate \eqref{eq2} for some fixed $q>1$
(i.e., the $A_\infty$ property) yields an exponential reverse Jensen inequality, so that
%\begin{equation}\label{eq5}
%\fint_{\Delta} k \, d \sigma \le C_{B_0} \, e^{\,\fint_{\Delta} \log k \, d \sigma} 
%\end{equation}
for any $\Delta$ as in  \eqref{eq2},
%so that combining \eqref{eq5} with the usual Jensen inequality, we obtain
\begin{equation}\label{eq6}
e^{\,\fint_{\Delta} \log k \, d \sigma} \approx \fint_{\Delta} k \, d \sigma = \frac{\hm(\Delta)}{\sigma(\Delta)},
\end{equation}
with implicit constants that may depend on $B_0$.
See \cite[Theorem 2.15, p. 405]{GR} for a proof of \eqref{eq6}. 
For the connection between $A_\infty$ and BMO, see \cite[Corollary 2.19, p. 409]{GR}. 

We shall require also the following.  %The following Lemma is just an estimate. 

\begin{lemma}\label{Lemma 2}   Let
$B_0:= B(x_0,r_0)$, with $x_0\in\pom$ and $r_0>0$.
Given $\epsilon \in (0,1)$, let $r_1 >0$ be such that \eqref{eq1} 
holds for all $x \in \pom\cap B_0$  
and $s \in (0, r_1)$, with $\eta = \epsilon$. 
Let $p \in (1,\infty)$ and suppose that $Mr \in (0, r_1)$.   If
we set $\Delta^* : = \Delta(x, Mr)$, and $\displaystyle a: = a_{x,Mr} = e^{\,\fint_{\Delta^*} \log k \, d \sigma}$, 
one then has
\begin{equation}\label{eq7}
\left(\fint_{\Delta^*} \left|1 - \frac{k}{a} \right|^p \, d \sigma \right)^{1/p} \le C_1 \epsilon^b,
\end{equation}
with $b = b(p) =(4p)^{-1}$, and for some
$C_1 = C_1(p,  B_0)$, uniformly for $x\in \pom\cap B_0$  and $r<r_1/M$.
\end{lemma}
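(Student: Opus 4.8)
The plan is to reduce \eqref{eq7} to a bound on the mean $L^p$-oscillation of $k/a$ about $1$ over the single surface ball $\Delta^*=\Delta(x,Mr)$, and then to extract this from the smallness of the mean oscillation of $\log k$ that \eqref{eq1} gives at the scale $s=Mr$. First I would record three facts about $\Delta^*$. Since $x\in\pom\cap B_0$ and we may assume without loss of generality that $r_1\le r_0$ (shrinking $r_1$ only makes the hypothesis \eqref{eq1} stronger), $\Delta^*$ is a surface ball centered in $B_0$ of radius $\le r_0$, so: (i) \eqref{eq1} applied with $s=Mr\in(0,r_1)$ gives $\fint_{\Delta^*}\big|\log k-\fint_{\Delta^*}\log k\big|\,d\sigma\le\epsilon$; (ii) the reverse H\"older inequality \eqref{eq2} holds on $\Delta^*$ for every exponent $q<\infty$; and (iii) the exponential reverse Jensen relation \eqref{eq6} gives $\fint_{\Delta^*}k\,d\sigma\approx a$, with implicit constants depending only on $B_0$.

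Next I would split $\Delta^*$ into a good set and a bad set. Writing $A:=\fint_{\Delta^*}\log k\,d\sigma$ (so $a=e^A$), set $G:=\{z\in\Delta^*:|\log k(z)-A|\le\epsilon^{1/2}\}$ and $B:=\Delta^*\setminus G$. Chebyshev's inequality together with fact (i) yields $\sigma(B)/\sigma(\Delta^*)\le\epsilon^{1/2}$. On $G$, the elementary inequality $|e^t-1|\le|t|\,e^{|t|}$ gives the pointwise bound $|k/a-1|=|e^{\log k-A}-1|\le\epsilon^{1/2}e^{\epsilon^{1/2}}\lesssim\epsilon^{1/2}$ (using $\epsilon<1$), whence $\frac{1}{\sigma(\Delta^*)}\int_G|1-k/a|^p\,d\sigma\lesssim_p\epsilon^{p/2}$, which is $\lesssim\epsilon^{1/4}$ since $p>1/2$.

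The main obstacle is the bad set, where $k/a$ is not under pointwise control; this is precisely where the reverse H\"older inequality enters and where the exponent $b=(4p)^{-1}$ is produced. On $B$ I would apply H\"older's inequality with the exponent pair $(q/p,(q/p)')$ for $q:=2p$, getting $\int_B(k/a)^p\,d\sigma\le\sigma(B)^{1-p/q}\big(\int_{\Delta^*}(k/a)^q\big)^{p/q}$; then \eqref{eq2} with this $q$ bounds $\fint_{\Delta^*}k^q\lesssim_{q,B_0}(\fint_{\Delta^*}k)^q$, and \eqref{eq6} replaces $\fint_{\Delta^*}k$ by $a$. Dividing by $\sigma(\Delta^*)$, the surplus power $1-p/q=\tfrac12$ on $\sigma(B)$ becomes the gain $\big(\sigma(B)/\sigma(\Delta^*)\big)^{1/2}\le\epsilon^{1/4}$, so $\frac{1}{\sigma(\Delta^*)}\int_B(k/a)^p\,d\sigma\lesssim_{p,B_0}\epsilon^{1/4}$; the remaining term $\sigma(B)/\sigma(\Delta^*)\le\epsilon^{1/2}$ is even smaller. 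Combining the good and bad contributions via $|1-k/a|^p\lesssim_p 1+(k/a)^p$ gives $\fint_{\Delta^*}|1-k/a|^p\,d\sigma\lesssim_{p,B_0}\epsilon^{1/4}$, and taking $p$-th roots yields \eqref{eq7} with $b=(4p)^{-1}$ and $C_1=C_1(p,B_0)$. Since none of the constants depend on the particular $x$ or $r$, only on $B_0$ (through the ADR, reverse H\"older, and reverse Jensen constants) and on $p$, the estimate is uniform for $x\in\pom\cap B_0$ and $r<r_1/M$, as claimed; the only bookkeeping points are the harmless reduction $r_1\le r_0$ and tracking this $B_0$-dependence throughout.
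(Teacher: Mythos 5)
Your proof is correct and follows essentially the same route as the paper: the identical Chebyshev decomposition of $\Delta^*$ into a good set where $|\log k - \log a|\le \sqrt{\epsilon}$ (giving the pointwise bound $|1-k/a|\lesssim\sqrt{\epsilon}$) and a bad set of relative measure $\le\sqrt{\epsilon}$, handled via Cauchy--Schwarz together with the reverse H\"older inequality \eqref{eq2} at exponent $2p$ and the reverse Jensen relation $a\approx\fint_{\Delta^*}k\,d\sigma$ from \eqref{eq6}. The only cosmetic difference is that you work with $p$-th powers and take the root at the end, whereas the paper splits at the level of the $L^p$ average using Minkowski's inequality; both yield the same exponent $b=(4p)^{-1}$ with constants depending only on $p$ and $B_0$.
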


\noindent{\it Remark}.  Applying \eqref{eq6} with $\Delta^*$ in place of $\Delta$,
we note for future reference that
\begin{equation}\label{eq6a}
a \approx  \frac{\hm(\Delta(x,Mr))}{\sigma(\Delta(x,Mr))},
\end{equation}

\begin{proof}
Set $A := \{z \in {\Delta^*}: |\log k(z) - \fint_{\Delta^*} \log k(y) \, d\sigma(y)| > \sqrt{\epsilon}\}$, so that 
by Chebyshev's inequality one has
\begin{equation}\label{eq8}
\sigma(A) \le \sqrt{\epsilon} \sigma(\Delta^*).
\end{equation}
Set $F := \Delta^* \setminus A$.   Then by definition of $a$, we have for $z \in F$,
\begin{equation}\label{eq9}
\left| \log k(z) - \log a \right| \le \sqrt{\epsilon}.
\end{equation}
Exponentiating \eqref{eq9} we obtain
\begin{equation}\label{eq10}
e^{-\sqrt{\epsilon}} \le \frac{k(z)}{a} \le e^{\sqrt{\epsilon}},
\end{equation}
so that by Taylor's Theorem one has
\begin{equation}\label{eq11}
\left|1 - \frac{k(z)}{a}\right| \le C\sqrt{\epsilon}\,,\qquad \forall\, z\in F\,.
\end{equation}
Then we have
\begin{multline}\label{eq12}
%\begin{split}
\left(\fint_{\Delta^*} \left|1 - \frac{k}{a} \right|^p \, 
d \sigma \right)^{1/p} \\[4pt]
\le \, \left(\frac{1}{\sigma(\Delta^*)} \int_{F} \left|1 - \frac{k}{a} \right|^p \, d \sigma\right)^{1/p} + 
\left(\frac{1}{\sigma(\Delta^*)} \int_{A} \left|1 - \frac{k}{a} \right|^p \, d \sigma \right)^{1/p} \\[4pt]
\lesssim \sqrt{\epsilon} + \left(\frac{1}{\sigma(\Delta^*)} \int_{A} \left|1 - \frac{k}{a} \right|^p \, d \sigma \right)^{1/p},
%\end{split}
\end{multline}
Using Minkowski's inequality, \eqref{eq8}, \eqref{eq6a}, %(with $\Delta^*$ in place of $\Delta$),
and that $k \in RH_{2p}$ one has 
\begin{equation}\label{eq13}
\begin{split}
\left(\frac{1}{\sigma(\Delta^*)} \int_{A} \left|1 - \frac{k}{a} \right|^p \, d \sigma \right)^{1/p} &\le \left(\frac{\sigma(A)}{\sigma(\Delta^*)} \right)^{1/p} + \frac{1}{a} \left(\frac{1}{\sigma(\Delta^*)} \int_A k^p \right)^{1/p}\\
&\le \epsilon^{\frac{1}{2p}} + \left(\frac{\sigma(A)}{\sigma(\Delta^*)}\right)^{\tfrac{1}{2p}} \frac{1}{a}\left(\frac{1}{\sigma(\Delta^*)} \int_A k^{2p}  \, d \sigma\right)^{1/2p}\\
&\le \epsilon^{\frac{1}{2p}} + \epsilon^{\frac{1}{4p}}\frac{1}{a}  \fint_{\Delta^*} k \, d \sigma \,
\lesssim \,\epsilon^{\frac{1}{ 4p}}\,,
\end{split}
\end{equation}
where the implicit constants may depend on $B_0$.
Then, since $p > 1$, putting \eqref{eq12} and \eqref{eq13} together we obtain \eqref{eq7},
%\begin{equation}\label{eq14}
%\left(\fint_{\Delta^*} \left|1 - \frac{k}{a} \right|^p \, d \sigma \right)^{1/p} \le C_1 \epsilon^{b},
%\end{equation}
with $C_1$ depending on $B_0$ and $p$. %and $b = \frac{1}{ 4p}$.
\end{proof}
Since $\pom$ is UR, 
%In \cite{HMT} it is shown that if $\om$ is a UR domain whose 
%measure theoretic boundary has full measure then 
for all $p \in (1,\infty)$ we have
\begin{equation}\label{eq15}
\lVert \mathcal{N}(\nabla \mathcal{S} f)\rVert_p  \le C \lVert f \rVert_p,
\end{equation}
where $C$ depends on the UR character of $\pom$, dimension, $p$, and the aperture of the cones defining
$\mathcal{N}$.   Estimate \eqref{eq15} is essentially proved in \cite{DS1} (bounds for the non-tangential maximal
function of $\nabla \mathcal{S} f$ follow from uniform bounds for the truncated singular integrals, plus a standard 
Cotlar Lemma argument;  the details may be found in \cite[Proposition 3.20]{HMT}.)

In addition, we have the following result proved in \cite{HMT}.
\begin{lemma}\label{hmtlemma}(\cite[Proposition 3.30]{HMT}), 
If $\om$ is a UR domain (recall Definition \ref{URdom}), %whose measure theoretic boundary has full measure 
then for  a.e. $x \in \pom$, and for all $f \in L^p(d\sigma)$, $1<p<\infty$,
\begin{equation}\label{eq16}
\lim_{\substack{Z \to x \\ Z \in \Gamma^-(x)}}\nabla \mathcal{S} f (Z) = -\frac{1}{2}\nu(x)f(x) + Tf(x)\,,
\end{equation}
and 
\begin{equation}\label{eq30}
\lim_{\substack{Z \to x \\ Z \in \Gamma^+(x)}}\nabla \mathcal{S} f (Z) = \frac{1}{2}\nu(x)f(x) + Tf(x)\,.
\end{equation}
where $T$ is a principal value singular integral operator, $\Gamma^+(x)$ is the cone at 
$x$ relative to $\om$, $\Gamma^-(x)$ is the cone at $x$ relative to ${\Omega_{\rm ext}}$, and $\nu$ is the 
outer normal to $\om$. 
\end{lemma}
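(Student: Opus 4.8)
The plan is to reconstruct the proof of \cite[Proposition 3.30]{HMT} in two stages: an $L^{p}$-reduction to a dense class of densities, and then a direct computation of the jump at $\sigma$-a.e.\ boundary point using the measure-theoretic (De Giorgi--Federer) regularity of $\pom$. For the reduction, given $f\in L^{p}(d\sigma)$ and $x\in\pom$ I would set
\[
\Theta^{\pm}f(x):=\limsup_{\substack{Z\to x\\ Z\in\Gamma^{\pm}(x)}}\Bigl|\nabla\mathcal{S}f(Z)\mp\tfrac12\nu(x)f(x)-Tf(x)\Bigr|,
\]
so that the Lemma amounts to $\Theta^{\pm}f=0$ a.e. Since $\nabla\mathcal{S}$, $f\mapsto\nu f$ and $T$ are linear, $\Theta^{\pm}$ is subadditive, and pointwise $\Theta^{\pm}f\le\mathcal{N}(\nabla\mathcal{S}f)+\tfrac12|f|+|Tf|$; by \eqref{eq15}, $|\nu|=1$ a.e., and the $L^{p}$-boundedness of $T$ (Remark \ref{r1}), this gives $\|\Theta^{\pm}f\|_{L^{p}}\lesssim\|f\|_{L^{p}}$. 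Hence, once the identities \eqref{eq16}--\eqref{eq30} are known for a dense class (where $\Theta^{\pm}\equiv0$ a.e.), approximating a general $f$ by such $f_{j}$ gives $\Theta^{\pm}f\le\Theta^{\pm}(f-f_{j})$ a.e., so $\|\Theta^{\pm}f\|_{p}\lesssim\|f-f_{j}\|_{p}\to0$ and $\Theta^{\pm}f=0$ a.e.; in particular the non-tangential limits exist a.e. It therefore suffices to treat $f$ Lipschitz with compact support.

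For such an $f$ (Lipschitz constant $L$), I would discard a $\sigma$-null set so that every remaining $x\in\pom$ lies in the reduced boundary of $\om$ --- which is legitimate because $\pom$ ADR implies $\om$ has locally finite perimeter, and $\sigma(\pom\setminus\partial_{*}\om)=0$ forces the reduced boundary to carry full $\sigma$-measure (see \cite{EG}) --- so that, by De Giorgi's structure theorem, $\om$ blows up in $L^{1}_{loc}$ at $x$ to the half-space $\{y:(y-x)\cdot\nu(x)<0\}$ and $\sigma$ has the hyperplane $\{(y-x)\cdot\nu(x)=0\}$ as approximate tangent plane of density one; I would also require that $Tf(x)$ and $\mathrm{p.v.}\!\int_{\Delta(x,\rho)}\mathcal{K}(x-y)\,d\sigma(y)$ exist at $x$. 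Fixing a small $\rho$ and $Z\in\Gamma^{+}(x)$ near $x$, I would split $\nabla\mathcal{S}f(Z)=\mathcal{R}f(Z)$ into the integral of $\mathcal{K}(Z-y)f(y)$ over $\pom\setminus\Delta(x,\rho)$, the integral of $\mathcal{K}(Z-y)(f(y)-f(x))$ over $\Delta(x,\rho)$, and $f(x)$ times the integral of $\mathcal{K}(Z-y)$ over $\Delta(x,\rho)$. The first piece converges, as $Z\to x$, to the corresponding integral with $Z$ replaced by $x$ (smooth kernel). The second is $O(L\rho)$ uniformly in $Z$ --- via the ADR bound $\int_{\Delta(x,\rho)}|y-x|\,|Z-y|^{-n}\,d\sigma\lesssim\rho$, which one proves using $\delta(Z)\approx|Z-x|$ on the cone and splitting according to whether $|y-x|\lessgtr2|Z-x|$ --- and converges to the analogous integral with $x$ in place of $Z$. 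Re-assembling the truncated pieces, the limits of these first two terms add up exactly to $Tf(x)$.

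Everything then reduces to the jump term: one must show
\[
\lim_{\substack{Z\to x\\ Z\in\Gamma^{+}(x)}}\int_{\Delta(x,\rho)}\mathcal{K}(Z-y)\,d\sigma(y)=\tfrac12\,\nu(x)+\mathrm{p.v.}\!\int_{\Delta(x,\rho)}\mathcal{K}(x-y)\,d\sigma(y),
\]
and likewise with $-\tfrac12\nu(x)$ when $Z\in\Gamma^{-}(x)$, the extra $\pm\tfrac12\nu(x)$ being the jump. In the flat model this is the classical Gauss-flux computation: since $\mathcal{K}=\nabla\mathcal{E}$, integrating the Riesz kernel over a hyperplane with the pole at signed distance $\approx-\delta(Z)$ on the $\om$-side kills the tangential part and leaves a normal component that tends to $\tfrac12\nu$, the normalization of $\mathcal{E}$ being precisely what produces the constant $\tfrac12$. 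To run this on a merely UR boundary I would replace the classical divergence theorem by the Gauss--Green theorem for sets of locally finite perimeter (with $\nu$ the measure-theoretic outer normal), and use the De Giorgi blow-up to say that at the relevant small scales $\om\cap\Delta(x,\rho)$ agrees with the tangent half-space up to a set of Lebesgue density zero, whose contribution to the integral vanishes in the limit thanks to the ADR bound from the previous step. An alternative way to finish is to note that the tangential component of $\nabla\mathcal{S}f$ does not jump --- because $\mathcal{S}f$ is continuous across $\pom$, so its tangential gradient along $\pom$ is the same from both sides and equals the tangential part of $Tf$ --- while the normal component jumps by $\pm\tfrac12 f$ by the double-layer version of the same Gauss--Green argument; either route isolates the finite-perimeter Gauss--Green theorem as the essential ingredient.

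The step I expect to be the main obstacle is exactly this last one: extracting the \emph{exact} constant $\tfrac12$, rather than merely a bounded quantity, from a boundary that is only uniformly rectifiable. This forces one to control, uniformly as $Z\to x$ along the cone, the deviation of $\om$ near $x$ from its approximate tangent half-space, and it is Ahlfors--David regularity together with the Gauss--Green theorem for finite-perimeter sets (applicable precisely because $\partial_{*}\om$ has full measure) that make this possible; the remaining ingredients are routine Calder\'on--Zygmund and real-variable arguments.
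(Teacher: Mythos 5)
You should note at the outset that the paper contains no proof of this lemma: it is quoted verbatim from \cite[Proposition 3.30]{HMT}, so there is no internal argument to compare against, and your outline in fact follows essentially the same route as the proof given there --- reduction to a dense class of nice densities via the nontangential maximal estimate \eqref{eq15}, the $L^p$ bound for the principal value operator, and a.e.\ existence of principal values on rectifiable sets, followed by identification of the jump at $\sigma$-a.e.\ point of the reduced boundary through the De Giorgi--Federer blow-up and the Gauss--Green theorem for sets of locally finite perimeter, which is exactly where the hypothesis $\sigma(\pom\setminus\partial_*\om)=0$ is used. One caution: the finite-perimeter Gauss--Green theorem by itself only controls the scalar flux $\int \mathcal{K}(Z-y)\cdot\nu(y)\,d\sigma(y)$, i.e.\ the normal component of the putative jump, so the step you describe as an ``alternative way to finish'' --- showing the tangential components do not jump, via continuity of $\mathcal{S}f$ across $\pom$ and a.e.\ tangential differentiability along the rectifiable boundary --- is not optional but the necessary complement to the flux computation, and it is precisely this part that requires the careful treatment carried out in \cite{HMT}.
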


\begin{remark} We have taken our fundamental solution to be positive, so 
for that reason there are some changes in sign in both \eqref{eq16} and \eqref{eq30} 
as compared to the formulation in \cite{HMT}. \end{remark}

\begin{remark}  Recall that by Definition \ref{URdom}, we have assumed in particular that
$\sigma(\pom\setminus \pom_*) = 0$, and therefore the measure theoretic outer unit normal
$\nu$ exists a.e. on $\pom$ (see \cite[Chapter 5]{EG}).
\end{remark}

We recall now some fundamental estimates relating harmonic measure and the Green function.
\begin{lemma}({\bf \cite{JK}})\label{CFMS} Let $\om \subset \ree$ be an NTA domain and let $G(X,Y)$ denote the Green function of $\om$. Let $x \in \pom$, $R \in (0, \diam \pom)$, 
and let $Y_R$ be a corkscrew point for $B(x,R)$.   %Then %there exists a constant 
%$C_1$ (depending on NTA constants and dimension) such that 
If $X \in \om \setminus B(x,2R)$, then
\begin{equation}\label{CFMS1}
G(X,Y_R) \approx \frac{\hm^X(\Delta(x, R))}{R^{n-1}}.
\end{equation}
Moreover,
if $g(Y)$ is the Green function with pole at infinity, and $\hm$ is harmonic measure associated to $g$ (see \cite{KT4}) then 
\begin{equation}\label{CFMS2}
G(Y_R) \approx \frac{\hm(\Delta(x, R))}{R^{n-1}}.
\end{equation}
The implicit constants in \eqref{CFMS1}-\eqref{CFMS2} depend only on dimension and the NTA constants.
\end{lemma}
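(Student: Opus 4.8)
The plan is to recall the standard Carleson--Fabes--Mourgoglou--Salsa type estimates for NTA domains, as developed in \cite{JK}, and explain how \eqref{CFMS1} and \eqref{CFMS2} follow. First I would treat \eqref{CFMS1}, which is the classical comparison between the Green function and harmonic measure at the scale of a surface ball. Fix $x\in\pom$ and $R\in(0,\diam\pom)$, and let $Y_R$ be a corkscrew point for $B(x,R)$, so that $\delta(Y_R)\approx R$. For $X\in\om\setminus B(x,2R)$, both $u(Y):=G(X,Y)$ and $v(Y):=\hm^Y(\Delta(x,R))$ (note $\hm^X(\Delta)=\hm^\cdot(\Delta)$ evaluated appropriately) are nonnegative harmonic functions in $\om\cap B(x,2R)$ that vanish continuously on $\Delta(x,2R)\setminus\Delta(x,R)$ — in fact $u$ vanishes on all of $\pom\cap B(x,2R)$ since $X$ lies outside $B(x,2R)$. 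By the boundary Harnack principle for NTA domains (\cite[Lemma 4.10]{JK}), the ratio $u/v$ is bounded above and below by constants depending only on dimension and the NTA constants, uniformly in $\om\cap B(x,R)$; evaluating at $Y_R$ gives $G(X,Y_R)\approx \hm^X(\Delta(x,R))\,/\,v(Y_R)$. It then remains to observe the normalization $v(Y_R)=\hm^{Y_R}(\Delta(x,R))\approx 1$, which is itself a consequence of the corkscrew condition together with the standard estimate that harmonic measure of a surface ball, evaluated at an interior corkscrew point for that ball, is bounded below (and trivially above by $1$); see \cite[Lemma 4.8]{JK}. Combining, and using that $\sigma(\Delta(x,R))\approx R^n$ is \emph{not} needed here — only the factor $R^{n-1}$ appearing in the fundamental solution scaling — one gets $G(X,Y_R)\approx \hm^X(\Delta(x,R))/R^{n-1}$, which is \eqref{CFMS1}.

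For \eqref{CFMS2}, the pole-at-infinity case, I would argue by a limiting procedure, exactly as in \cite{KT4}. Recall that the Green function $g$ with pole at infinity and its associated harmonic measure $\hm$ (for an unbounded NTA domain with unbounded complement) are obtained as normalized limits: fixing a base point, one takes $g(Y)=\lim_{j} G(X_j,Y)/G(X_j,Y_0)$ along a sequence $X_j\to\infty$, with $\hm=\lim_j \hm^{X_j}/G(X_j,Y_0)$ the corresponding limit of normalized harmonic measures (the existence of such limits and their basic properties are established in \cite{KT4}; see also \cite[Section 3]{KT1}). Applying \eqref{CFMS1} with $X=X_j$ for $j$ large (so that $X_j\in\om\setminus B(x,2R)$), dividing through by $G(X_j,Y_0)$, and passing to the limit $j\to\infty$, the left side converges to $g(Y_R)$ and the right side to $\hm(\Delta(x,R))/R^{n-1}$, with comparability constants unchanged since they depend only on dimension and the NTA constants and not on $j$. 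This yields \eqref{CFMS2}.

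The only mild subtlety — and the step I would flag as requiring the most care — is the uniformity of the boundary Harnack constant in \eqref{CFMS1} as the pole $X$ ranges over all of $\om\setminus B(x,2R)$, including points arbitrarily close to the boundary far from $x$ or points going to infinity; this is precisely what makes the limiting argument for \eqref{CFMS2} legitimate. But this uniformity is built into the NTA formulation of the boundary Harnack principle in \cite{JK}: the relevant estimates are scale-invariant and depend only on the corkscrew and Harnack chain constants, so there is genuinely nothing that degenerates. I would therefore present the proof as essentially a citation to \cite{JK} for \eqref{CFMS1}, followed by the short normalization-and-limit argument above for \eqref{CFMS2}, noting that the constants throughout depend only on dimension and the NTA constants as claimed.
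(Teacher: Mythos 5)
Your overall plan coincides with the paper's own treatment of Lemma \ref{CFMS}: the paper does not give a self-contained proof either, but justifies \eqref{CFMS1} by citation to \cite{JK} (see also \cite{CFMS}), and notes that \eqref{CFMS2} follows from the finite-pole case together with the construction of the pair $(g,\hm)$ with pole at infinity as a normalized limit, carried out in \cite{KT2}. Your limiting argument for \eqref{CFMS2} is exactly that, modulo the routine care needed to pass the normalized measures of the fixed ball $\Delta(x,R)$ to the limit (adjust radii and use doubling), and the uniformity of constants in $j$ is indeed the point that makes it work.

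The one genuine gap is in the sketch you offer for \eqref{CFMS1}. The function $v(Y)=\hm^{Y}(\Delta(x,R))$ does not vanish on $\pom\cap B(x,2R)$ --- it tends to $1$ at points of $\Delta(x,R)$ --- so the comparison (boundary Harnack) principle of \cite{JK} cannot be applied to the pair $u=G(X,\cdot)$, $v$ in $B(x,2R)\cap\om$; restricting to the annular boundary piece where both vanish only gives comparability of $u/v$ near that annulus, not on $B(x,R)\cap\om$ where $Y_R$ lies. More seriously, even granting such comparability in the $Y$ variable, the step ``evaluating at $Y_R$ gives $G(X,Y_R)\approx \hm^{X}(\Delta(x,R))/v(Y_R)$'' is a non sequitur: boundary Harnack never brings in the value $\hm^{X}(\Delta(x,R))$ at the far pole $X$, nor the factor $R^{n-1}$, and that relation is precisely the content of the lemma. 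The actual argument in \cite{JK}/\cite{CFMS} proves the two inequalities separately: $\hm^{X}(\Delta(x,R))\lesssim R^{n-1}G(X,Y_R)$ via a smooth cutoff adapted to $B(x,2R)$, the Green representation $\int\varphi\,d\hm^{X}=\int_\om G(X,Y)\Delta\varphi(Y)\,dY$ (valid since $\varphi(X)=0$), the Carleson estimate (Lemma \ref{Carlest}) and Harnack chains; and $R^{n-1}G(X,Y_R)\lesssim \hm^{X}(\Delta(x,R))$ via the maximum principle in $\om\setminus B(Y_R,cR)$, the bound $G(X,Y)\lesssim |X-Y|^{1-n}$, Bourgain's lemma (Lemma \ref{bourgainsLemma}) and doubling of harmonic measure. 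Since you ultimately defer to \cite{JK}, the conclusion stands, but the heuristic should either be corrected along these lines or dropped in favor of the bare citation, as in the paper.
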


\noindent{\it Remark.}  In the case that the pole is finite (estimate \eqref{CFMS1}), 
this result was proved in \cite{JK};  the result for pole at infinity follows from the finite case,
plus the construction in \cite{KT2}.
Estimate \eqref{CFMS1} has been extended to general divergence form elliptic operators in \cite{CFMS}.

\begin{lemma}(\cite{JK})\label{Carlest} Let $\om$ be an NTA domain and suppose that $u$ is harmonic in $\om$ and vanishes continuously on $\Delta(x,2r)$ then 
\begin{equation}\label{Carlest1}
\sup_{X \in B(x,r) \cap \om} u(X) \lesssim u(Y_{x,r})
\end{equation}
where $Y_{x,r}$ is a corkscrew point for $\Delta(x,r)$.  The implicit constants depend only on the NTA constants
and dimension.
\end{lemma}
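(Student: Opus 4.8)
\emph{Proof proposal.} The plan is to recover the classical Carleson estimate (as in \cite{JK}) by playing off the Harnack chain condition against a quantitative H\"older-vanishing estimate at the boundary. We take $u$ to be nonnegative (this is implicit in the statement and is needed), and normalize so that $u(Y_{x,r})=1$; the goal is $u\le C$ on $\om\cap B(x,r)$. A first remark: since $u$ vanishes continuously on $\Delta(x,2r)$ and $\overline{B(x,3r/2)}\cap\pom\subset\Delta(x,2r)$, the function $u$ extends continuously by $0$ to the compact set $\overline{B(x,3r/2)}\cap\overline{\om}$, so $M':=\sup_{\om\cap B(x,3r/2)}u<\infty$. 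This finiteness --- the only a priori control on $u$ outside $B(x,r)$ --- is exactly what the buffer between the radii $r$ and $2r$ provides.

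The first ingredient controls $u$ well inside $\om$: if $X\in\om\cap B(x,r)$ with $\delta(X)\ge\gamma r$, then $X$ and $Y_{x,r}$ both lie in $B(x,r)$ with distance $\gtrsim_\gamma r$ to $\pom$, so the Harnack chain condition joins them by a chain of at most $C(\gamma)$ balls in $\om$; chaining Harnack's inequality for the nonnegative harmonic function $u$ gives $u(X)\le C(\gamma)\,u(Y_{x,r})=C(\gamma)$. The second ingredient is a boundary decay estimate coming from the exterior corkscrew condition. Since $\Omega_{\rm ext}$ satisfies the corkscrew condition, $\om^{c}$ satisfies a capacity density condition; hence, applying the maximum principle in $\om\cap B(q,s)$ together with the standard Bourgain lower bound $\hm^{Y}_{\om\cap B(q,s)}\big(\pom\cap B(q,s)\big)\ge c$ for $Y\in\om\cap B(q,s/2)$, one obtains $\sup_{\om\cap B(q,s/2)}v\le(1-c)\sup_{\om\cap B(q,s)}v$ for every nonnegative $v$ harmonic in $\om$ that vanishes on $\pom\cap B(q,s)$. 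Iterating over dyadic $s$ yields, for some $\beta>0$,
\[
v(Y)\ \le\ C\,\big(|Y-q|/\rho\big)^{\beta}\,\sup_{\om\cap B(q,\rho)}v\,,\qquad Y\in\om\cap B(q,\rho)\,,
\]
whenever $v\ge 0$ is harmonic in $\om$ and vanishes on $\Delta(q,2\rho)$. In particular, applying this to the harmonic measure of a ``sphere'' $\om\cap\partial B(q,s)$ shows that such a harmonic measure, seen from an interior point at distance $\approx\delta$ from $q$, is $\lesssim(\delta/s)^{\beta}$.

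Finally, to combine the two I would use a standard iteration argument. Take a near-maximizer $X_1\in\om\cap B(x,r)$ with $u(X_1)>\tfrac12\sup_{\om\cap B(x,r)}u$: if $\delta(X_1)\gtrsim r$ we are done by the first ingredient, so suppose $\delta(X_1)$ is small, let $q_1\in\pom$ be nearest to $X_1$, and run the maximum principle in $\om\cap B(q_1,K\delta(X_1))$. Using the decay estimate for the harmonic measure of $\om\cap\partial B(q_1,K\delta(X_1))$ with $K$ a large absolute constant produces a point $X_2\in\om\cap\partial B(q_1,K\delta(X_1))$ with $u(X_2)\ge 2u(X_1)$; iterating yields points $X_j$ with $u(X_j)\ge 2^{j-1}u(X_1)$ staying inside $B(x,3r/2)$ for $j$ up to $\approx\log(r/\delta(X_1))$, so that $2^{j-1}u(X_1)\le M'$. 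Feeding this back --- together with the doubling property of harmonic measure and the fact that all corkscrew points at scales in $[r,2r]$ are joined to $Y_{x,r}$ by bounded Harnack chains --- one arrives at $\sup_{\om\cap B(x,r)}u\le C\,u(Y_{x,r})$. The hard part is precisely this last bookkeeping: the iteration must be arranged so that the uncontrolled values of $u$ in the annulus $B(x,2r)\setminus B(x,r)$ never enter the final constant. This is carried out in detail in \cite{JK}; cf.\ also the harmonic measure / Green function comparisons of Lemma \ref{CFMS}.
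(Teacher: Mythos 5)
This lemma is not proved in the paper at all: it is quoted from Jerison--Kenig \cite{JK} (their Carleson estimate), so there is no in-paper argument to compare against, and your outline is indeed an attempt to reconstruct the classical \cite{JK} proof. The ingredients you list are the correct ones: nonnegativity of $u$ (rightly flagged as implicit but necessary), finiteness of $M'=\sup_{\om\cap B(x,3r/2)}u$ coming from continuous vanishing on $\Delta(x,2r)$, the interior bound via Harnack chains to $Y_{x,r}$, and a boundary H\"older decay estimate. One small imprecision: since the lemma assumes only that $\om$ is NTA (no ADR), the decay estimate should be derived from the capacity density condition implied by the exterior corkscrew condition, not from Lemma \ref{bourgainsLemma}, whose hypothesis is an ADR boundary; the conclusion you state is nevertheless correct for NTA domains.

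The genuine gap is exactly where you stop, namely the step that turns the value-doubling iteration into the estimate \eqref{Carlest1}. As written, running the iteration for $j\lesssim\log(r/\delta(X_1))$ steps only yields $u(X_1)\le M'\,(\delta(X_1)/r)^{c}$, a bound in terms of the uncontrolled quantity $M'$, and the proposed repair (``feeding back'' together with doubling of harmonic measure and Harnack chains) is not an argument: nothing written removes $M'$ from the constant, and invoking doubling of harmonic measure here is dangerous, since in NTA domains doubling is itself normally deduced from the Carleson estimate, so that route risks circularity. The standard way to close is structurally different: normalize $u(Y_{x,r})=1$ and argue by contradiction. The contrapositive of the Harnack chain estimate gives $\delta(X)\le\gamma(N)\,r$ whenever $X$ lies in the relevant part of $B(x,3r/2)$ and $u(X)\ge N$, with $\gamma(N)\lesssim N^{-c}$ once one observes that the chain length can be taken $\lesssim\log\Lambda$. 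Hence along the iteration $u(X_j)\ge 2^{j-1}N$ forces $\delta(X_j)\lesssim (2^{j-1}N)^{-c}\,r$, the displacements $|X_{j+1}-X_j|\lesssim K\,\delta(X_j)$ are geometrically summable, and for $N$ larger than a constant depending only on the NTA constants and dimension the whole sequence remains in $B(x,3r/2)$ while $u(X_j)\to\infty$, contradicting $M'<\infty$; thus $M'$ enters only qualitatively and never appears in the final constant, which is precisely the point you identify but do not carry out. Since you explicitly defer this decisive bookkeeping to \cite{JK}, the proposal as it stands is a correct road map rather than a proof; supplying the contradiction argument above (or an equivalent mechanism) would complete it.
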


We also recall a result proved by Bourgain about harmonic measure on domains with ADR boundary.

\begin{lemma}(\cite{Bo})\label{bourgainsLemma} Let $\om \subset \ree$ be an open set 
with $n$-dimensional ADR boundary $\pom$. Then there exists 
constants $C_0>2$, and $c_1 > 0$, depending on dimension and ADR such 
that for all $x_1 \in \pom$ and $R \in (0,\diam(\pom))$, if $Y \in \om \cap B(x_1, R)$ then
\begin{equation}\label{bourgainest}
\hm^Y(B(x_1,C_0R)) \ge c_1.
\end{equation}
%We note that without loss of generality, we may suppose that $C>2$.
\end{lemma}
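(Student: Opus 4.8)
The plan is to build an explicit subharmonic barrier adapted to the surface ball $\Delta(x_1,3R)$ and to compare it with harmonic measure via the maximum principle. Throughout I take $n\ge 2$; the case $n=1$ is handled identically after replacing the Riesz kernel below by the (truncated) logarithmic one $\log(C_0R/|X-y|)$, which is nonnegative on $B(x_1,C_0R)$ and satisfies the same two key bounds. Set
\[
v(X):=\frac{1}{C'R}\int_{\Delta(x_1,3R)}\frac{d\sigma(y)}{|X-y|^{n-1}},\qquad X\in\ree,
\]
where $C'=C'(n,\mathrm{ADR})$ is to be fixed so that $v\le 1$ on all of $\ree$. Such a $C'$ exists by the elementary estimate $\int_{\Delta(x_1,3R)}|X-y|^{1-n}\,d\sigma(y)\lesssim R$, uniformly in $X$: for $|X-x_1|<4R$ one has $\Delta(x_1,3R)\subset\{y\in\pom:|X-y|<8R\}$ and sums the upper ADR bound over the dyadic annuli $\{2^{-k-1}\cdot 8R\le |X-y|<2^{-k}\cdot 8R\}$; for $|X-x_1|\ge 4R$ one has $|X-y|\ge R$ on $\Delta(x_1,3R)$, so $|X-y|^{1-n}\le R^{1-n}$ and the bound follows from $\sigma(\Delta(x_1,3R))\lesssim R^n$. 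Being the Newtonian potential of a positive, compactly supported measure, $v$ is superharmonic on $\ree$, harmonic off $\overline{\Delta(x_1,3R)}\subset\pom$, hence harmonic (in particular bounded and subharmonic) on $\om$, and $v(X)\to 0$ as $|X|\to\infty$.

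Two quantitative properties of $v$ drive the argument. \textbf{(i) Nondegeneracy near $x_1$:} if $X\in\om\cap B(x_1,R)$ and $x^{\ast}\in\pom$ realizes $\delta(X)=|X-x^{\ast}|<R$, then $\Delta(x^{\ast},R)\subset\Delta(x_1,3R)$ with $|X-y|<2R$ on $\Delta(x^{\ast},R)$, so, using the lower ADR bound (legitimate since $R<\diam\pom$),
\[
\int_{\Delta(x_1,3R)}|X-y|^{1-n}\,d\sigma(y)\;\ge\;(2R)^{1-n}\,\sigma(\Delta(x^{\ast},R))\;\gtrsim\;R,
\]
whence $v(X)\ge c_2$ for some $c_2=c_2(n,\mathrm{ADR})>0$; in particular $v(Y)\ge c_2$. \textbf{(ii) Decay beyond scale $C_0R$:} for any $C_0>4$, if $|X-x_1|\ge(C_0-1)R$ then $|X-y|\ge(C_0-4)R$ on $\Delta(x_1,3R)$, so by the upper ADR bound $v(X)\le\varepsilon_0:=c(n,\mathrm{ADR})\,(C_0-4)^{1-n}$, and $\varepsilon_0\to 0$ as $C_0\to\infty$ since $n\ge 2$.

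It remains to invoke the maximum principle for harmonic measure. Put $g:=1_{\Delta(x_1,C_0R)}+\varepsilon_0\,1_{\pom\setminus\Delta(x_1,C_0R)}$, a bounded Borel function on $\pom$. The global bound $v\le 1$ gives $\limsup_{X\to\xi}v(X)\le 1=g(\xi)$ for $\xi\in\Delta(x_1,C_0R)$, while the pointwise bound in (ii) gives $\limsup_{X\to\xi}v(X)\le\varepsilon_0=g(\xi)$ for $\xi\in\pom$ with $|\xi-x_1|\ge C_0R$. Since $v$ is bounded and subharmonic on $\om$, comparison with the Perron--Wiener--Brelot solution of $\om$ with boundary data $g$ yields
\[
v(X)\;\le\;\int_{\pom}g\,d\omega^X\;\le\;\omega^X(\Delta(x_1,C_0R))+\varepsilon_0,\qquad X\in\om.
\]
Evaluating at $X=Y\in\om\cap B(x_1,R)$ and using (i), $c_2\le\omega^Y(\Delta(x_1,C_0R))+\varepsilon_0$. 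Fixing $C_0=C_0(n,\mathrm{ADR})>2$ large enough that $\varepsilon_0\le c_2/2$, and recalling $\Delta(x_1,C_0R)=\pom\cap B(x_1,C_0R)$, we conclude $\omega^Y(B(x_1,C_0R))\ge c_2/2=:c_1$, as desired.

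I expect the only genuine subtlety to be the invocation of the maximum principle on the — possibly unbounded and non-smooth — domain $\om$: one must know that bounded Borel boundary data are resolutive and that a bounded subharmonic function whose boundary $\limsup$ is dominated by $g$ at every point of $\pom$ is itself dominated by $\int g\,d\omega^X$. This is classical potential theory requiring no regularity of $\pom$; alternatively one may run the same comparison on the bounded domain $\om\cap B(x_1,C_0R)$, at the cost of a refined maximum principle allowing a polar exceptional set on its boundary. All the remaining content is in the two ADR estimates (i) and (ii), which are routine.
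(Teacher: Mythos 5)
Your argument is correct, and it is precisely the classical barrier construction of Bourgain: the paper does not reproduce a proof, citing \cite{Bo} directly, and the standard proof there (and in subsequent expositions) is exactly this comparison of harmonic measure with a normalized Riesz potential of the surface measure on a fixed surface ball, using the upper ADR bound for the global bound $v\le 1$ and the off-scale decay, the lower ADR bound for nondegeneracy near $x_1$, and the maximum principle for the final comparison. One cosmetic slip: in your $n=1$ aside, $\log(C_0R/|X-y|)$ is not quite nonnegative for all $X\in B(x_1,C_0R)$ and $y\in\Delta(x_1,3R)$ (one can have $|X-y|$ up to $(C_0+3)R$); replacing $C_0R$ by, say, $(C_0+3)R$ in the kernel fixes this without affecting any of the three bounds.
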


\section{Proofs Theorems \ref{T1} and \ref{T2}}

\begin{proof}[Proof of Theorem \ref{T1}]
Fix $B_0=B(x_0,r_0)$, and $B_0^\star:=B(x_0, r_0^\star)$,
with $x_0\in\pom$,  $r_0>0$, and $r_0^\star = 100C_0(|X_1-x_0| + r_0 +1)$, 
where $C_0$ is the constant
in Lemma \ref{bourgainsLemma}.  Fix also
$\epsilon \in (0, 1)$, and
let $r_1 >0$ be such that \eqref{eq1} holds with $\eta = \epsilon$, 
 for $x \in \pom\cap B_0^\star$ and $s \in (0,r_1)$. 
 Without loss of generality we take $r_1 < \min\{\frac{\delta({X_1})}{50}, 1,r_0\}$. 
 Let $M := 50 \epsilon^{-\frac{1}{8n}}$, %set $r_2:= \frac{1}{M} r_1$, 
 and  suppose that $Mr \in (0,r_1)$.  We now fix $x \in B_0\cap\pom$, and for any points $y,z \in \Delta(x,r)$,
 let $y^*,z^*$ denote  arbitrary points in
 $\Gamma^-(y) \cap B(y,r/2)$ and in $\Gamma^-(z) \cap B(z,r/2)$, respectively. Setting 
 $$\Delta := \Delta(x,r)\,,\qquad \Delta^* := \Delta(x,Mr)\,,$$ 
 we shall first prove that for any such $y,z,y^*,z^*$,
\begin{equation}\label{eq17}
\left( \fint_\Delta \left| \nabla \mathcal{S} 1_{\Delta^*}(z^\ast) - \fint_\Delta \nabla \mathcal{S} 1_{\Delta^*}(y^*) \, d\sigma(y) \right|^2 \, d \sigma(z) \right)^\frac{1}{2} \le C \epsilon^\gamma,
\end{equation}
where $C = C(n,UR,B_0, B_0^\star, \delta(X_1),|X_1-x_0|)$, and 
where $\gamma$ is a positive constant, depending
only on dimension.  For the sake of notational convenience, in the sequel,
we shall often allow generic 
and implicit constants
to depend upon these parameters, without explicitly making note of such dependence.
As before, set $\displaystyle a: = a_{x,Mr} = e^{\,\fint_{\Delta^*} \log k \, d \sigma}$ and write
\begin{equation}\label{eq18}
1_{\Delta^*} = \left[\left(1 - \frac{k}{a}\right)1_{\Delta^*}\right] + \left[\frac{k}{a}\right] - \left[\left(\frac{k}{a}\right)1_{(\Delta*)^c}\right] .
\end{equation}
Using \eqref{eq18} we have that the left hand side of \eqref{eq17} is bounded by the sum of three terms $\RNum{1}, \RNum{2}$ and $\RNum{3}$ where
\begin{equation}\label{eq19}
\RNum{1} = \left( \fint_\Delta \left| \nabla \mathcal{S} \left[\left(1 - \frac{k}{a}\right)1_{\Delta^*}\right] (z^\ast) - \fint_\Delta \nabla \mathcal{S} \left[\left(1 - \frac{k}{a}\right)1_{\Delta^*}\right](y^*) \, d\sigma(y) \right|^2 \, d \sigma(z) \right)^\frac{1}{2},
\end{equation}
\begin{equation}\label{eq20}
\RNum{2} = \left( \fint_\Delta \left| \nabla \mathcal{S} \left[\frac{k}{a}\right] (z^\ast) - \fint_\Delta \nabla \mathcal{S} \left[\frac{k}{a}\right](y^*) \, d\sigma(y) \right|^2 \, d \sigma(z) \right)^\frac{1}{2},
\end{equation}
and
\begin{equation}\label{eq21}
\RNum{3} = \left( \fint_\Delta \left| \nabla \mathcal{S} \left[\left(\frac{k}{a}\right)1_{(\Delta*)^c}\right] (z^\ast) - \fint_\Delta \nabla \mathcal{S} \left[\left(\frac{k}{a}\right)1_{(\Delta*)^c}\right](y^*) \, d\sigma(y) \right|^2 \, d \sigma(z) \right)^\frac{1}{2}.
\end{equation}
We begin by estimating $\RNum{1}$.  By \eqref{eq15} and Lemma \ref{Lemma 2} with $p=2$, we have
\begin{multline}\label{eq23}
%\begin{split}
\RNum{1} \le  2 \left(\fint_\Delta \left| \mathcal{N} 
\left( \nabla \mathcal{S} \left[\left(1 - \frac{k}{a}\right)
1_{\Delta^*}\right]\right) \right|^2 \, d \sigma \right)^{\frac{1}{2}} \\*
\lesssim \,
M^{\frac{n}{2}} \left(\fint_{\Delta^\ast} 
\left| %\mathcal{N} \left( \nabla \mathcal{S} \left[\left(
1 - \frac{k}{a} %\right)1_{\Delta^*}\right)\right] 
\right|^2 \, d \sigma \right)^{\frac{1}{2}}\,
\lesssim \, M^{\frac{n}{2}}\epsilon^{\frac{1}{8}} \,\lesssim\, \epsilon^{\frac{1}{16}}.
%\end{split}
\end{multline}
Now for $\RNum{2}$, we recall that $k = k_1^{X_1}$ is harmonic measure for $\om$ with pole at $X_1$.  Moreover,
$\mathcal{E}(\cdot - z^*)$ and $\mathcal{E}(\cdot - y^*)$ are 
harmonic in $\om$ since $z^*, y^* \in {\Omega_{\rm ext}}$, and decay to 0 at infinity, and are therefore equal 
to their respective Poisson integrals in $\om$.   Consequently,
\begin{equation}\label{eq24}
%\begin{split}
\RNum{2} % &=\frac{1}{a}  \left( \fint_\Delta \left| \nabla \mathcal{E}({X_1} - z^*)  
% - \fint_\Delta \nabla \mathcal{E}({X_1} - y^*) \, d\sigma(y) \right|^2 \, d \sigma(z) 
% \right)^\frac{1}{2} \\ &
= \frac{1}{a}  \left( \fint_\Delta  \fint_\Delta \left| \nabla \mathcal{E}({X_1} - z^*)  - \nabla \mathcal{E}({X_1} - y^*) \, d\sigma(y) \right|^2 \, d \sigma(z) 
\right)^\frac{1}{2}\,.
%\end{split}
\end{equation}
Recall that by definition, $Mr +r_0< 2r_0 \ll r_0^\star$, and that $x\in B_0\cap\pom$,
hence $B(x,Mr) \subset 2B_0\subset B_0^\star$.   Thus, we may apply Lemma \ref{Lemma 1},
with $r_0^\star, \,B_0^\star$ in place of $r_0,\, B_0$, with $w=x_0$ and $s = r_0^\star$
(thus, $B(w,s)= B_0^\star$, $\Delta=\Delta_0^\star  =B_0^\star\cap\pom$), and with 
$E=\Delta(x,Mr)$, to deduce that
\begin{equation}\label{eq2.8a}
\frac{\hm(\Delta_0^\star)}{\hm(\Delta(x,Mr)} \lesssim 
\left(\frac{r_0^\star}{Mr}\right)^{n+\tau}\,,\qquad \forall \, \tau>0\,,
\end{equation}
where the implicit constant of course depends upon $\tau$ and $B_0^\star$.

%From here we will split into cases. Let $R:= \delta(X_1)$ and let $x_1 \in \pom$ 
%be a touching point for the ball $B(X_1, R)$.   Set $B^{(0)} := B(x_1,C_0R)$ and 
%$\Delta^{(0)} = B^{(0)} \cap \pom$, 
%where $C_0$ is the constant from Lemma \ref{bourgainsLemma}. Set 
%$\Delta^{(k)} = (2^kB^0) \cap \pom$ and $A_k = \Delta^{(k+1)} \setminus 
%\Delta^{(k)}$ for $k= 1, 2 \dots$;
We note that by Lemma \ref{bourgainsLemma}, and the definition of
$r_0^\star$, there is a uniform constant $c>0$ such that
\begin{equation}\label{eq2.8}\hm(\Delta^\star_0) \ge c\,.
\end{equation}
%\smallskip
%\noindent{\it Case 1:} $\Delta(x,Mr)$ meets $\Delta^{(1)}$. In this case we have that 
%$\Delta(x,Mr) \subset \Delta^{(2)}$, 
We further note that, since $y^*,z^* \in B(x,2r)$, 
\begin{equation*}%\label{eq37}
\left| \nabla \mathcal{E}({X_1} - z^*)  - \nabla \mathcal{E}({X_1} - y^*)   \right| 
\lesssim \frac{r}{\delta(X_1)^{n+1}}.
\end{equation*}
Then continuing \eqref{eq24}, we have, using \eqref{eq6a}, %Lemma \ref{bourgainsLemma} 
\eqref{eq2.8a} (with $\tau=1/2$), and  \eqref{eq2.8}, %and Lemma \ref{Lemma 1} (with $\beta = \frac{1}{2n}$),
\begin{multline}\label{eq38}
%\begin{split}
\RNum{2} %&
\lesssim \frac{1}{a} \frac{r}{\delta(X_1)^{n+1}} %\\ &
\approx \frac{\sigma(\Delta(x,Mr))}{\hm(\Delta(x,Mr))} \frac{r}{\delta(X_1)^{n+1}} \\[4pt]
=\frac{\sigma(\Delta(x,Mr))}{\hm(\Delta^{\star}_0)}\frac{\hm(\Delta^{\star}_0)}{\hm(\Delta(x,Mr))} 
\frac{r}{\delta(X_1)^{n+1}} \\[4pt]
\lesssim \,(Mr)^n\left(\frac{r_0^\star}{Mr} \right)^{n+\frac{1}{2}}\frac{r}{\delta(X_1)^{n+1}} 
\,\lesssim \, 
M^{-\frac{1}{2}}r^{\frac{1}{2}} 
\,\lesssim \frac1M \approx \epsilon^{\frac{1}{8n}}\,, 
%\end{split}
\end{multline}
since  $r<r_1/M \ll \delta(X_1)/M$, where we remind the reader that in this part of the argument,
we allow implicit constants to depend upon $\delta(X_1)$, and on the various parameters involved 
in the definition of $r_0^\star$.

For $\RNum{3}$ we use basic Calder\'{o}n-Zygmund type estimates as follows.  Let 
$$\Delta_j' := \Delta(x, 2^jr)\,,\qquad A'_j := \Delta_j' \setminus \Delta_{j-1}'\,,$$ 
so that
\begin{multline}\label{eq25}
\RNum{3} = \\ %&= \left( \fint_\Delta \left| \nabla \mathcal{S} \left[\left(\frac{k}{a}\right)
%1_{(\Delta*)^c}\right] (z^\ast) - \fint_\Delta \nabla \mathcal{S} 
%\left[\left(\frac{k}{a}\right)1_{(\Delta*)^c}\right](y^*) \, d\sigma(y) \right|^2 \, d \sigma(z) \right)^\frac{1}{2} \\
 \left( \fint_\Delta \left| \fint_\Delta 
 \left(\nabla \mathcal{S} \left[\left(\frac{k}{a}\right)
 1_{(\Delta*)^c}\right] (z^\ast) -  \nabla \mathcal{S} \left[\left(\frac{k}{a}\right)
 1_{(\Delta*)^c}\right](y^*)\right) \, d\sigma(y) \right|^2 \, d \sigma(z) \right)^\frac{1}{2} \\
 = \left( \fint_\Delta \left| \fint_\Delta \int_{\pom\setminus \Delta^*} \right[\nabla 
\mathcal{E}(z^* - w) -\nabla \mathcal{E}(y^*-w)\left]\frac{k(w)}{a} \, d\sigma(w)  
\, d\sigma(y) \right|^2 \, d \sigma(z) \right)^\frac{1}{2}  \\
\le \sum_{j: 2^j \ge M} \left( \fint_\Delta \left[\fint_\Delta   
 \int_{A'_j} \left|\nabla \mathcal{E}(z^* - w) -\nabla \mathcal{E}(y^*-w)\right|\frac{k(w)}{a} \, d\sigma(w) 
  \, d\sigma(y) \right]^2 \, d \sigma(z) \right)^\frac{1}{2} \\
 \lesssim  \sum_{j: 2^j \ge M} \left( \fint_\Delta \left[ \fint_\Delta  \int_{A'_j} 
\frac{r}{(2^jr)^{n+1}}\frac{k(w)}{a} \, d\sigma(w)  \, d\sigma(y) \right]^2 \, d \sigma(z) \right)^\frac{1}{2}\\
% \le  \sum_{j: 2^j \ge M}  \frac{1}{2^j}
%\left( \fint_\Delta \left[ \fint_\Delta  \fint_{\Delta_j'} \frac{k(w)}{a} \, 
%d\sigma(w)  \, d\sigma(y) \right]^2 \, d \sigma(z) \right)^\frac{1}{2} \\
% \le \, \sum_{j: 2^j \ge M}  \frac{1}{2^k} \frac{1}{a}\frac{\hm(\Delta_j')}{\sigma(\Delta_j')}\,
%\,\lesssim \,\frac{1}{M} \,\lesssim \,\epsilon^{\frac{1}{8n}}\,,
=\,\, \sum_{j\in J_1}\, ... \,\, +\,\, \sum_{j \in J_2}\,  ...\, =: III_1 \, +\, III_2\,,
\end{multline}
where
$$J_1:= \left\{j:\, M\leq 2^j \leq r_0^\star /r\right\}\,, \quad J_2:= \left\{j: \,2^j  > r_0^\star/r\right\}\,.$$
%$$J_1:= \{j: 2^j \geq M \,\, {\rm and}\,\, B(x,2^jr) \subset B_0^\star\}\,,$$ and
%$$J_2:= \{j: 2^j \geq M \,\, {\rm and}\, \,B(x,2^jr) \,\, {\rm meets}\, \,\ree \setminus B_0^\star\}\,.$$
For $j\in J_1$, we may apply Lemma \ref{Lemma 1}, with $B_0^\star$ in place of
$B_0$, and with $\Delta = \Delta_j'$ and $E = \Delta(x,Mr)$, to obtain
\begin{equation}\label{eq2.12}
\frac{\hm(\Delta_j')}{\hm(\Delta(x,Mr)} \lesssim  \left(\frac{2^{j}}{M}\right)^{n+\beta n} 
\end{equation}
We then have
\begin{multline*}
III_1 \,\lesssim\,\,
 \sum_{j\in J_1} \, \frac{1}{2^j}
\left( \fint_\Delta \left[ \fint_\Delta  \fint_{\Delta_j'} \frac{k(w)}{a} \, 
d\sigma(w)  \, d\sigma(y) \right]^2 \, d \sigma(z) \right)^\frac{1}{2} \\
 \lesssim \, \sum_{j\in J_1} \, \frac{1}{2^j} \frac{1}{a}\frac{\hm(\Delta_j')}{\sigma(\Delta_j')}\,
\approx \, \sum_{j\in J_1} \, \frac{1}{2^j} \frac{\sigma(\Delta(x,Mr))}{\sigma(\Delta_j')} \frac{\hm(\Delta_j')}{\hm(\Delta(x,Mr)}\\
\lesssim\, \sum_{j: 2^j \geq M} 2^{-j}  \left(\frac{M}{2^j}\right)^n  \left(\frac{2^{j}}{M}\right)^{n+1/2} 
\,\lesssim \,\frac{1}{M} \,\lesssim \,\epsilon^{\frac{1}{8n}}\,,
\end{multline*}
where in the middle line we have used  \eqref{eq6a}, and
in the last line, the ADR property and \eqref{eq2.12} with $\beta =1/(2n)$.
Since $\hm$ is a probablility measure, we also have that
\begin{multline*}III_2 \,\lesssim \, \sum_{j: 2^j r > r_0^\star}r \, (2^jr)^{-n-1}\, a^{-1} \,
\approx \, r \,\left(\frac1{r_0^\star}\right)^{n+1}\frac{\sigma(\Delta(x,Mr))}{\hm(\Delta(x,Mr))}\\
=\,  r \,\left(\frac1{r_0^\star}\right)^{n+1}\frac{\sigma(\Delta(x,Mr))}{\hm(\Delta_0^*)}
\frac{\hm(\Delta_0^*)}{\hm(\Delta(x,Mr))}\,\lesssim \, \
 r \,\left(\frac1{r_0^\star}\right)^{n+1} (Mr)^n \left(\frac{r_0^\star}{Mr}\right)^{n+1/2}\\
 =\, \left(\frac{r}{r_0^\star}\right)^{1/2} M^{-1/2} \lesssim M^{-1} \approx \epsilon^{\frac1{8n}}\,,
\end{multline*}
where in the middle line we have used the ADR property, \eqref{eq2.8}, and
\eqref{eq2.8a} (with $\tau = 1/2$), and in the last line that $Mr \leq r_1\ll r_0^\star$.
Combining the estimates for $I$, $II$, $III_1$ and $III_2$, we obtain \eqref{eq17}
%\begin{equation*}%\label{eq27}
%\left( \fint_\Delta \left| \nabla \mathcal{S} 1(z^\ast) - \fint_\Delta \nabla \mathcal{S} 1(y^*) \, 
%d\sigma(y) \right|^2 \, d \sigma(z) \right)^\frac{1}{2} \le C \epsilon^\gamma
%\end{equation*}
with $\gamma = 1/(8n)$
(or with $\gamma =1/16$, if $n=1$).  %which is \eqref{eq17}. 

With  \eqref{eq17} in hand, we continue with the proof of Theorem \ref{T1}.  Setting
\begin{equation*}%\label{eq27}
\nt^- f(x) := \lim_{\substack{Z \to x \\ Z \in \Gamma^-(x)}} \nabla \mathcal{S}f(Z) \, ,
\end{equation*}
since the limit exists for a.e. $x\in \pom$ (see Lemma \ref{hmtlemma}),
we may now use \eqref{eq15},  \eqref{eq17}, and 
dominated convergence to obtain  
\begin{equation}\label{eq28}
\left( \fint_\Delta \left| \nt^- 1_{\Delta^*}(z) - \fint_\Delta \nt^- 1_{\Delta^*}(y) \, d\sigma(y) \right|^2 \, d \sigma(z) \right)^\frac{1}{2} \le C \epsilon^\gamma\,,
\end{equation}
for $x \in \pom\cap B_0$ 
and $r < r_1/M$.
In addition since $\log k_2^{X_2} \in VMO(d\sigma)$ the same analysis shows that \eqref{eq28} holds for $\nt^- 1_{\Delta^*}$ replaced with
\begin{equation}\label{eq29}
\nt^+ 1_{\Delta^*} : = \lim_{\substack{Z \to x \\ Z \in \Gamma^+(x)}}\nabla\mathcal{S}1_{\Delta^*}(Z).
\end{equation}
By \eqref{eq16} and \eqref{eq30}
\begin{equation}\label{eq31}
\nu(x) 1_{\Delta^*}(x) = \lim_{\substack{Z \to x \\ Z \in \Gamma^+(x)}}\nabla\mathcal{S}1_{\Delta^*}(Z) - \lim_{\substack{Z \to x \\ Z \in \Gamma^-(x)}}\nabla\mathcal{S}1_{\Delta^*}(Z)\,.
\end{equation}
%%so that one has
%\begin{equation}\label{eq32}
%\left( \fint_\Delta \left| \nu(z) 1_{\Delta^*}(z) - \fint_\Delta \nu(y) 1_{\Delta^*}(y) \, 
%d\sigma(y) \right|^2 \, d \sigma(z) \right)^\frac{1}{2} \le C\epsilon^\gamma,
%\end{equation}
%that is, 
Thus, since $\Delta \subset \Delta^*$,
by \eqref{eq28} and its analogue for $\mathcal{S}^+$, we obtain
\begin{equation}\label{eq33}
\left( \fint_\Delta \left| \nu(z) - \fint_\Delta \nu(y) \, d\sigma(y) \right|^2 \, d \sigma(z) \right)^\frac{1}{2} \le C\epsilon^\gamma,
\end{equation}
for $x \in \pom\cap B_0$ and $0< r< M^{-1}r_1(\epsilon) \approx \epsilon^{1/(8n)} r_1(\epsilon)$. 
Hence, $\nu \in VMO_{loc}(d\sigma)$.
\end{proof}

\begin{proof}[Proof of Theorem \ref{T2}]
The proof is nearly the same as that of Theorem \ref{T1}, with a few minor differences.
Recall that in contrast to the situation for Theorem \ref{T1}, we now impose the stronger assumption that $\om$ is a chord arc domain and that $\log k_1 \in VMO$ (globally). In this setting,  Lemma \ref{CFMS} and Lemma \ref{Carlest} hold,
and harmonic measure is doubling.
Moreover,  \eqref{eq1}, Lemma \ref{Lemma 1} and Lemma \ref{Lemma 2} hold globally
(i.e., not localized to a ball $B_0$). Following the proof of Theorem \ref{T1},  we proceed as follows.

As before, we fix $\epsilon \in (0,1)$, and 
let $r_1>0$ be such that \eqref{eq1} holds with $\eta = \epsilon$, 
 for $x \in \pom$ and $s \in (0,r_1)$. 
We take $r_1 < \min\{\frac{\delta({X_1})}{50}, 1\}$, let $M := 50 \epsilon^{-\frac{1}{8n}}$, 
 and  suppose that $Mr \in (0,r_1)$.  
 Given $x \in \pom$, we set 
 $\Delta := \Delta(x,r)$ and $ \Delta^* := \Delta(x,Mr),$ and for $y,z\in\Delta$, we
 let $y^*,z^*$ denote arbitrary points in
 $\Gamma^-(y) \cap B(y,r/2)$, and  $\Gamma^-(z) \cap B(z,r/2)$, respectively. 
Once again, we seek to prove \eqref{eq17},
%\begin{equation}\label{eq36}
%\left( \fint_\Delta \left| \nabla \mathcal{S} 1_{\Delta^*}(z^\ast) - \fint_\Delta \nabla \mathcal{S} 1_{\Delta^*}(y^*) \, d\sigma(y) \right|^2 \, d \sigma(z) \right)^\frac{1}{2} \le C \epsilon^\gamma,
%\end{equation}
but now with $C = C(n,UR)$.  % and $\gamma$ is a positive constant, uniform in $x$ and $r$.

We break up $1_{\Delta^*}$ as in \eqref{eq18} and estimate the left hand side of \eqref{eq17} by 
the same three  terms, $\RNum{1}, \RNum{2}$ and $\RNum{3}$. We estimate term  $\RNum{1}$ 
exactly as before.  

Next, we will show that 
\begin{equation}\label{eq34}
\RNum{2} = \left( \fint_\Delta \left| \nabla \mathcal{S} \left[\frac{k}{a}\right] (z^\ast) - \fint_\Delta \nabla \mathcal{S} \left[\frac{k}{a}\right](y^*) \, d\sigma(y) \right|^2 \, d \sigma(z) \right)^\frac{1}{2} = 0,
\end{equation}
for $k = k_1$, the Poisson kernel at infinity.
Fix  $x_0\in \pom$ and $R \gg r$, and let $\varphi_R(X) \in C_0^\infty$ be a postivite smooth cutoff function on $B(x_0,R)$ 
such that $\supp \varphi \subset B(x_0,R),$  
$|\nabla \varphi| \lesssim \frac{1}{R},\, |\nabla^2 \varphi| \lesssim \frac{1}{R^2}$, 
and $\varphi \equiv 1$ on $B(x_0,\tfrac{R}{2})$.  Let
\begin{multline*}%\label{eq35}
\RNum{2}_R := \\[4pt]
\left(\fint_\Delta \left|   \fint_\Delta \int_{\pom} \left[\nabla \mathcal{E}(w - z^*)  - \nabla \mathcal{E}(w - y^*)\right]\varphi_R(w) \left[\frac{k(w)}{a}\right] \, d\sigma(w) \, d\sigma(y) \right|^2 \, d \sigma(z)\right)^{\frac{1}{2}} 
\end{multline*}
If we set $A_R = B(x_0, R) \setminus B(x_0,\frac{R}{2})$, let
$g(X)$ be the Green function with pole at infinity,  and let $\mathcal{L} := \nabla\cdot\nabla$
denote the usual Laplacian in $\ree$, then by definition
\begin{multline*}%\label{eq36}
%\begin{split}
\RNum{2}_R^2 = \\*[4pt]
\frac{1}{a^2}\fint_\Delta \left| \fint_\Delta \iint_{\om} 
\mathcal{L}\Big( \left[\nabla \mathcal{E}(W - z^*)  - \nabla 
\mathcal{E}(W - y^*)\right]\varphi_R(W)\Big) g(W) \, dW \, d\sigma(y) \right|^2 \, d \sigma(z) \\*[4pt]
\lesssim \frac{1}{a^2}\left(\iint_{A_R}g(W) \frac{1}{R^{n+2}} \frac{r}{R} \, dW\right)^2.
%\end{split}
\end{multline*}
Let $Y_R$ be a corkscrew point for $B(x_0, R)$, and set $\Delta_R = \pom \cap B(x_0, R)$.   Then using 
Lemma \ref{Carlest}, Lemma \ref{CFMS}  and \eqref{eq4} (without dependence on $B_0$) we have that
\begin{equation}
\begin{split}
\frac{1}{a^2}\left(\iint_{A_R}g(W) \frac{1}{R^{n+2}} \frac{r}{R} \, dW\right)^2 &\lesssim \frac{1}{a^2}\left(\frac{r}{R^{2}}\frac{1}{R^{n+1}} \iint_{B(x_0,R)}g(W)  \, dW\right)^2  \\
&\lesssim \frac{1}{a^2} \left( \frac{r}{R^2} g(Y_R) \right)^2 \\
&\lesssim \frac{1}{a^2} \left(\frac{r}{R} \frac{\hm(\Delta_R)}{\sigma(\Delta_R)} \right)^2 \\
&\lesssim \left(\frac{\sigma(\Delta(x_0, Mr))}{\sigma(\Delta_R)} \frac{\hm(\Delta_R)}{\hm(\Delta(x_0,Mr))}\right)^2 \frac{r^2}{R^2}\\
&\lesssim \frac{r}{R},
\end{split}
\end{equation}
Thus, for  $z^*,y^*$ as above, $\RNum{2}_R \to 0$ as $R \to \infty$, uniformly in the points 
$z^*$ and $y^*$ under consideration. By the Dominated Convergence Theorem, we see that 
\begin{equation}
\RNum{2} = \lim_{R \to \infty} \RNum{2}_R = 0.
\end{equation}
Finally, to handle term $\RNum{3}$, we proceed as before until, as in \eqref{eq25}, we obtain
\begin{equation}\label{eq37}
\RNum{3} \lesssim  \sum_{j: 2^j \ge M} \left( \fint_\Delta \left[ \fint_\Delta  \int_{A'_j} 
\frac{r}{(2^jr)^{n+1}}\frac{k(w)}{a} \, d\sigma(w)  \, d\sigma(y) \right]^2 \, d \sigma(z) \right)^\frac{1}{2}.
\end{equation}
In the present setting, we can estimate everything as we did for $\RNum{3}_1$ in the proof of Theorem \ref{T1} (since we have no dependence on $B^\star_0$ or $B_0$).  Exactly as before,
we find that
$$III\lesssim \epsilon^{\frac1{8n}}\,,$$
and the conclusion of Theorem \ref{T2} follows.
\end{proof}

\noindent{\bf Acknowledgements}.  We are grateful to Chema Martell for pointing out an error
in the original version of this manuscript, which could be fixed by working with local VMO spaces.

\end{document}